%% file: main_ACOM.tex
\RequirePackage{etex}
\documentclass[pdflatex,sn-mathphys-num]{sn-jnl}

\usepackage{graphicx}
\usepackage{multirow}
\usepackage{amsmath,amsfonts,amssymb}
\usepackage{mathrsfs}
\usepackage{xcolor}
\usepackage{textcomp}
\usepackage{manyfoot}
\usepackage{booktabs}
\usepackage{listings}

\usepackage[english]{babel}
\usepackage{csquotes}
\usepackage{microtype}
\usepackage{nicefrac}
\usepackage{comment}
\usepackage{siunitx}

\usepackage{tikz,tikz-network,pgfplots}
\pgfplotsset{compat=1.18}
\usepgfplotslibrary{groupplots}

\usepackage[caption=false]{subfig}

\usepackage[ruled,vlined,linesnumberedhidden]{algorithm2e}

\definecolor{Hylinkcolor}{HTML}{800006}
\definecolor{Hycitecolor}{HTML}{2E7E2A}
\definecolor{Hyfilecolor}{HTML}{131877}
\definecolor{Hyurlcolor}{HTML}{8A0087}
\definecolor{Hymenucolor}{HTML}{727500}
\definecolor{Hyruncolor}{HTML}{137776}

\hypersetup{
  colorlinks=true,
  linkcolor=Hylinkcolor,
  citecolor=Hycitecolor,
  filecolor=Hyfilecolor,
  urlcolor=Hyurlcolor,
  menucolor=Hymenucolor,
  runcolor=Hyruncolor
}

\definecolor{blind1}{HTML}{d55e00}
\definecolor{blind2}{HTML}{cc79a7}
\definecolor{blind3}{HTML}{0072b2}
\definecolor{blind4}{HTML}{f0e442}
\definecolor{blind5}{HTML}{009e73}

\definecolor{mycolor1}{rgb}{0.86600,0.32900,0.00000}
\definecolor{mycolor2}{rgb}{0.92900,0.69400,0.12500}
\definecolor{mycolor3}{rgb}{0.12941,0.12941,0.12941}

\theoremstyle{thmstyleone}
\newtheorem{theorem}{Theorem}
\newtheorem{proposition}[theorem]{Proposition}
\newtheorem{lemma}[theorem]{Lemma}
\newtheorem{corollary}[theorem]{Corollary}

\theoremstyle{thmstyletwo}

\newtheorem{remark}{Remark}

\theoremstyle{thmstylethree}
\newtheorem{definition}{Definition}

\begin{document}

\title[Steering dynamic network centrality]{Steering dynamic network centrality via control theory}

\author*[1]{\fnm{Fabio} \sur{Durastante}}\email{fabio.durastante@unipi.it}
\equalcont{These authors contributed equally to this work.}

\author[1]{\fnm{Beatrice} \sur{Meini}}\email{beatrice.meini@unipi.it}
\equalcont{These authors contributed equally to this work.}

\author[2]{\fnm{Luca} \sur{Saluzzi}}\email{luca.saluzzi@unife.it}
\equalcont{These authors contributed equally to this work.}

\affil*[1]{\orgdiv{Department of Mathematics}, \orgname{University of Pisa}, \orgaddress{\street{Largo Bruno Pontecorvo, 5}, \city{Pisa}, \postcode{56127}, \country{Italy}}}

\affil[2]{\orgdiv{Department of Mathematics and Informatics}, \orgname{University of Ferrara}, \orgaddress{\street{Via Nicolò Machiavelli, 30}, \city{Ferrara}, \state{44121}, \country{Italy}}}

\abstract{Time-evolving networks, or temporal networks, play a crucial role in modeling dynamic interactions across various domains, including biology, social sciences, and information technology. Unlike static networks, these systems undergo continuous changes in topology and edge weights, influencing processes such as information flow, transportation efficiency, and neural activity. Understanding and controlling these networks are essential for predicting future behavior and optimizing dynamic processes. This work focuses on the problem of dynamic centrality, a measure of node importance in time-dependent networks. Specifically, we address how to steer network centrality to a desired state by making minimal modifications to the network structure. This problem is formulated as an optimal control problem for an ordinary differential equation, either matrix- or vector-based, where the control acts on network edges. The proposed framework generalizes centrality control problems studied in static networks and leverages the Pontryagin Maximum Principle for efficient solutions. For large-scale problems, the required matrix-function actions are approximated by Krylov-type techniques, avoiding the explicit formation of dense matrix functions. Numerical experiments on synthetic and real temporal networks show that the proposed framework can effectively steer receive centrality under prescribed control constraints.
}

\keywords{Temporal network, centrality measure, dynamic receive, control theory}

\pacs[MSC Classification]{05C50, 49M05, 15A16}

\maketitle

\section{Introduction}%

Complex networks have become a fundamental tool for modeling and understanding the structure and dynamics of interconnected systems across various domains, such as biology~\cite{Goh20078685}, social sciences, and information technology~\cite{Vespignani2009425}. In many cases, networks are considered static entities, where the set of nodes and edges remains unchanged over time. However, numerous real-world networks evolve dynamically, with both topology and edge weights varying over time. Such networks are referred to as \emph{time-evolving} or \emph{temporal} networks~\cite{Holme201297}. These dynamic interactions capture a wide range of phenomena, including communication patterns in social networks, where two users may follow each other for a certain period before one of them stops following the other. Similarly, in transportation systems, flight routes between airports may only be active during specific months of the year~\cite{PhysRevE.84.016105}. In brain networks, neural pathways may be activated only under particular conditions~\cite{Bullmore2009,doi:10.1073/pnas.1018985108}. Understanding how these networks evolve is crucial for uncovering patterns, predicting future behavior, and controlling processes occurring within the network.

The study of time-evolving networks has led to the development of an array of methods for analyzing how information propagates through such systems. Concepts such as dynamic centrality measures~\cite{HighamNet,10.1145/1830252.1830262,MR4490469,10.1093/comnet/cnw017}, temporal motifs~\cite{10.1145/3018661.3018731}, and communicability matrices~\cite{PhysRevE.83.046120} extend classical static graph metrics~\cite{MR2736969} to the temporal domain. These tools enable a more accurate and comprehensive understanding of the roles of individual nodes and overall network behavior in time-varying structures. In this context, we focus on how to control network connections by adding, modifying, or removing edges to drive measures of network communicability toward a desired state. This work addresses the centrality problem in dynamic networks, specifically \emph{dynamic centrality}~\cite{HighamNet}. This measure computes the centrality of a node by solving a matrix differential equation when computing communicability indices between any two nodes or a vector differential equation when determining dynamic broadcast or receive centrality. Our goal is to determine a minimal modification of the time-dependent network that achieves a prescribed centrality at a given final time $T$. This generalizes the analogous problem for centralities in static networks~\cite{BenziGuglielmi,cipolla2025enforcingkatzpagerankcentrality,MR4822696}. Since dynamic centrality is formulated as the solution of an ordinary differential equation (ODE), the problem can be posed as an optimal control problem for an ODE, either matrix- or vector-based, where the control acts on network edges to steer the centrality toward the desired values.

Control theory has been widely applied to networks in various contexts. One example is the introduction of a dynamical process defined on network edges~\cite{Nepusz2012}, demonstrating that the controllability properties of such edge-based dynamics differ significantly from those of traditional node-based dynamics. In the latter case, it is possible to identify a set of driver nodes with time-dependent control inputs that can steer the entire system's dynamics~\cite{Liu2011}. This edge-based control framework is particularly relevant for monitoring the evolution of dynamic phenomena on networks, including epidemic spread, opinion dynamics and other compartmental models~\cite{9903909,Gomes,albi2026controlkineticopiniondynamics}. 
Recent work has further addressed controllability in dynamic networks by proposing an adaptive control algorithm that minimizes driver node switching costs under stochastic topology changes without requiring prior knowledge of network evolution~\cite{11150536}; while in the same direction also a \emph{graphical criterion} to evaluate the controllable subspace of temporal networks via maximum matching on aggregated static representations has been introduced~\cite{TU2024129906}, and another approach adopts a temporal ant colony optimization–based approach to efficiently approximate maximum matching in temporal networks to identify driver nodes for structural controllability~\cite{Xia02092025}.

For the problems we consider here, we have either a nonlinear or a linear ODE whose state variable represents the time-dependent centrality measure. Hence, the problem we face is that of steering this state towards a prescribed value by modifying the time-dependent edge weights. This is indeed an instance of a bilinear control problem in which the control, or more precisely one of its functions, acts multiplicatively on the state of the system. As it is often the case, the control problem is highly dimensional, and due to the underlying state equation~\cite{HighamNet} involves the computation of large-scale matrix-function~\cite{HighamBook}, and matrix-function vector products~\cite{BerljafaGuettel2015,GuettelKnizhnerman2013}. We discuss and adapt the related techniques for working in our regime of interest---large scale, repeated computation, possibly non-normal matrices. To formulate the solution of the optimal control problem we make use of the Pontryagin Maximum Principle (PMP) which requires computing the evolution of the adjoint variable, due to the presence of the matrix function in the state equation this requires the computation of large-scale Fr\'echet derivatives of a matrix function~\cite{MR3080997}, and again their application against a vector~\cite{FrechetTrick} through the construction of suitable rational Krylov methods.
Henceforth, the main contributions of this work include a pattern-constrained optimal-control formulation for steering dynamic receive centrality in temporal networks and the derivation of sufficient linear feasibility constraints to preserve the well-posedness of the logarithmic model. Furthermore, we establish PMP-based first-order optimality conditions for both logarithmic and linear dynamics and present a matrix-free projected-gradient algorithm that utilizes Krylov and rational Krylov approximations for matrix logarithm and Fréchet derivative actions. Finally, we provide empirical validation of our approach using both synthetic and real temporal networks.
The manuscript is organized as follows. In Section~\ref{sec:notation_and_basic_definition}, we introduce the notation and context necessary to describe time-dependent networks. Section~\ref{sec:dynamical_system} presents the construction of the dynamic centrality measure we focus on~\cite{HighamNet}, which serves as the foundation for formulating the problem of steering centrality toward a desired state in Section~\ref{sec:the-steering-problem}. Once the problem is formulated, we discuss its solution using the PMP in Section~\ref{sec:PMP}, followed by a description of its discretization and efficient implementation in Section~\ref{sec:discretizing_and_solving}. Section~\ref{sec:numerical_examples} applies the proposed methodology to synthetic and real-world temporal network test cases. Finally, in Section~\ref{sec:conclusion_and_future_perspectives}, we conclude our analysis and discuss potential extensions.

\subsection{Notation and basic definitions}\label{sec:notation_and_basic_definition}

\begin{definition}\label{def:time-evolving-graph}
A \emph{time-evolving weighted network (or graph)} is a dynamic graph that evolves over time, formally represented as a tuple $G_t = (V, E_t, w_t)$, where $V$ is the set of vertices, which remains constant over time, $E_t \subseteq V \times V$ is the set of edges at time $t$, for $t \in \mathbb{T}$ and $\mathbb{T}$ the discrete or continuous time domain; $w_t: E_t \to \mathbb{R}^+$ is a function that assigns a weight to each edge at time $t$, representing the strength, capacity, or cost of the connection between vertices; see Figure~\ref{fig:time-evolving-graph} for an example.
\begin{figure}[htbp]
    \centering
\begin{center}
\begin{tikzpicture}[scale=0.8]
\begin{scope}[xshift=-4cm]
\Vertex[size=0.3,color=white]{A} \Vertex[x=2,size=0.3,color=white]{B} \Vertex[y=-2,size=0.3,color=white]{C}
\Edge[label=$1$](A)(B)
\Edge[label=$2$](B)(C)
\Text[x=1, y=0.6]{$t_1$}
\end{scope}
\begin{scope}[xshift=0cm]
\Vertex[size=0.3,color=white]{A} \Vertex[x=2,size=0.3,color=white]{B} \Vertex[y=-2,size=0.3,color=white]{C}
\Edge[label=$1$](A)(B)
\Edge[label=$3$](A)(C)
\Text[x=1, y=0.6]{$t_2$}
\end{scope}
\begin{scope}[xshift=4cm]
\Vertex[size=0.3,color=white]{A} \Vertex[x=2,size=0.3,color=white]{B} \Vertex[y=-2,size=0.3,color=white]{C}
\Edge[label=$2$](A)(B)
\Edge[label=$3$](B)(C)
\Text[x=1, y=0.6]{$t_3$}
\end{scope}
\end{tikzpicture}
\end{center}
\caption{Three-node temporal network with weighted edges over three time steps.\label{fig:time-evolving-graph}}
\end{figure}
If for all $t \in \mathbb{T}$ all vertices $(v_i,v_j) \in E_t$ are reciprocated, then the network is said to be undirected, otherwise it is directed.
\end{definition}

The edge set $E_t$ and weight function $w_t$ are both time-dependent, reflecting the fact that the structure and properties of the graph may change as time progresses. To represent it in a compact form we define the related concept of a time-dependent (weighted) adjacency matrix.

\begin{definition}\label{def:time-adjacency}
Let $G_t = (V, E_t, w_t)$ be a time-evolving weighted graph, where $V = \{v_1, v_2, \dots, v_n\}$ is a set of $n$ vertices. The \emph{adjacency matrix} $A(t) \in \mathbb{R}^{n \times n}$ of the graph $G_t$ at time $t$ is a square matrix, where each element $(A(t))_{ij}$ is defined as:
\[
(A(t))_{ij} = 
\begin{cases} 
w_t(v_i, v_j) & \text{if } (v_i, v_j) \in E_t, \\
0 & \text{if } (v_i, v_j) \notin E_t,
\end{cases}
\]
where $w_t(v_i, v_j)$ is the weight of the edge between vertices $v_i$ and $v_j$ at time $t$. For an undirected network $A(t)$ is a symmetric matrix for all $t \in \mathbb{T}$. 
\end{definition}

Throughout the paper we assume that all $G_t$ are such that $A(t) \leq \overline{A}$ for all $t \in \mathbb{T}$. This boundedness property guarantees that the spectral radius of all $A(t)$ is bounded as well, and will permit formulating the differential models in Section~\ref{sec:dynamical_system}. We also note that, in many applications, the network $G_t$ is observed only at a finite collection of time points $0 = t_1 < t_2 < \cdots < t_k = T$. In this setting, we still take the time domain to be $\mathbb{T} = [0,T]$ and extend the observed network to all $t \in \mathbb{T}$ by defining $G_t$ to be piecewise constant on the intervals $[t_i, t_{i+1})$, that is, $G_t = G_{t_i}$ for $t \in [t_i, t_{i+1})$.

The other set of tools we need to write the problem formulation and solution are matrix functions and some related properties.

\begin{definition}[{Matrix Function~\cite[Definition~1.2]{HighamBook}}]
Let \( f : \mathbb{C} \to \mathbb{C} \) be a scalar function that is defined and analytic on the spectrum of an \( n \times n \) matrix \( A \in \mathbb{C}^{n \times n} \). The \textit{matrix function} \( f(A) \) can be defined in terms of the Jordan canonical form of \( A = X J X^{-1} \). The matrix \( J \) has a block-diagonal form with Jordan blocks \( J_k \) corresponding to each eigenvalue \( \lambda_k \) of \( A \):
\[
J = \text{diag}(J_1, J_2, \dots, J_m),
\]
where each Jordan block \( J_k \) has the form
\[
J_k = \begin{bmatrix}
\lambda_k & 1 & 0 & \cdots & 0 \\
0 & \lambda_k & 1 & \cdots & 0 \\
\vdots & \vdots & \ddots & \ddots & \vdots \\
0 & 0 & \cdots & \lambda_k & 1 \\
0 & 0 & \cdots & 0 & \lambda_k
\end{bmatrix}.
\]
The matrix function \( f(A) \) is then defined by applying \( f \) to each Jordan block~\( J_k \):
\[
f(A) = X \begin{bmatrix} f(J_1) & 0 & \cdots & 0 \\ 0 & f(J_2) & \cdots & 0 \\ \vdots & \vdots & \ddots & \vdots \\ 0 & 0 & \cdots & f(J_m) \end{bmatrix} X^{-1},
\]
where \( f(J_k) \) is defined by applying \( f \) to each entry along the superdiagonal in \( J_k \) using the formula
\[
f(J_k) = \begin{bmatrix} f(\lambda_k) & f'(\lambda_k) & \frac{f^{(2)}(\lambda_k)}{2!} & \cdots & \frac{f^{(n_k-1)}(\lambda_k)}{(n_k-1)!} \\ 0 & f(\lambda_k) & f'(\lambda_k) & \cdots & \frac{f^{(n_k-2)}(\lambda_k)}{(n_k-2)!} \\ \vdots & \vdots & \ddots & \ddots & \vdots \\ 0 & 0 & \cdots & f(\lambda_k) & f'(\lambda_k) \\ 0 & 0 & \cdots & 0 & f(\lambda_k) \end{bmatrix}.
\]
Here, \( n_k \) is the size of the Jordan block \( J_k \), and \( f^{(j)}(\lambda_k) \) denotes the \( j \)-th derivative of \( f \) evaluated at \( \lambda_k \).
\end{definition}

\begin{definition}[Fréchet Derivative of a Matrix Function]\label{def:frechet}
Let \( f : \mathbb{R}^{n \times n} \to \mathbb{R}^{n \times n} \) be a matrix function. The \textit{Fréchet derivative} of \( f \) at \( A \in \mathbb{R}^{n \times n} \) in the direction $E$ is a linear map \( \mathrm{L}_f : \mathbb{R}^{n \times n} \to \mathbb{R}^{n \times n} \) that approximates the change in \( f \) due to a small perturbation \( E \in \mathbb{R}^{n \times n} \) in \( A \). Formally, \( \mathrm{L}_f \) is the Fréchet derivative of \( f \) at \( A \) in the direction $E$ if
\[
f(A + E) = f(A) + \mathrm{L}_f(A,E) + o(\|E\|),
\]
where \( o(\|E\|) \) denotes a term that vanishes faster than \( \|E\| \) as \( \|E\| \to 0 \).
\end{definition}

This can be computed for a $(2n-1)$-times differentiable function $f$  in an open set containing the eigenvalues of $A$ as the matrix function of the \(2 \times 2\) block matrix~\cite[Theorem~3.8]{HighamBook}
\begin{equation}\label{eq:explicit-frechet}
    \mathrm{L}_f(A, E) = \left[ \begin{matrix} I & O \end{matrix} \right] f\left( \begin{bmatrix} A & E \\ O & A \end{bmatrix} \right) \left[ \begin{matrix} O \\ I \end{matrix} \right],
\end{equation}
where \( I \) and \(O\) are the \( n \times n \) identity and zero matrices, respectively. For some particular functions $f$ it is possible to find other expressions of $\mathrm{L}_f(A,E)$, e.g., in the case of the logarithm~\cite{MR3080997}.

Finally, we recall the definition of Katz's centrality index~\cite{Katz_1953} for a static network, which we will need for using the collection of Katz centralities computed for the different time steps $t \in \mathbb{T}$.

\begin{definition}\label{def:katz-centrality}
Let $G = (V, E, w)$ be a static weighted network with $n$ vertices and adjacency matrix $A \in \mathbb{R}^{n \times n}$. The \emph{Katz centrality} of the nodes in $G$ is defined as the vector $\boldsymbol{\mu} \in \mathbb{R}^n$ that satisfies the relationship: $\boldsymbol{\mu} = a A \boldsymbol{\mu} + \mathbf{1}$, where $a > 0$ is a scalar attenuation factor, and $\mathbf{1} \in \mathbb{R}^n$ is the column vector of all ones. Provided that $a < \frac{1}{\rho(A)}$, where $\rho(A)$ denotes the spectral radius of $A$, the Katz centrality vector is well-defined and can be computed via the matrix resolvent as $
\boldsymbol{\mu} = (I - a A)^{-1} \mathbf{1}$,
where $I$ is the $n \times n$ identity matrix.
\end{definition}

\section{The dynamical system formulation of the network centrality}\label{sec:dynamical_system}

With the goal of quantifying the ability of node $v_i$ to communicate with node $v_j$ over time in a dynamic network, in \cite{HighamNet}  a dynamical system formulation is introduced for defining a \emph{dynamic communicability matrix} for the network. The foundation of this approach is the time-dependent adjacency matrix $A(t)$ (Definition~\ref{def:time-adjacency}), where edges appear, disappear and, generally, modify their weights as the network evolves. The dynamic communicability matrix $S(t)$ is constructed by counting dynamic walks that involve edges present at time $t$ or earlier, while also accounting for the temporal decay of information relevance.

To update $S(t)$ over small time intervals $\delta t$,  the following formula is introduced in~\cite{HighamNet}:
\begin{equation}\label{eq:discrete-formulation}
   S(t + \delta t) = (I + e^{-b \delta t} S(t)) (I - a A(t + \delta t))^{-\delta t} - I, 
\end{equation}
where $S(0) = 0$, $I$ is the identity matrix, $a > 0$ is the edge attenuation parameter that controls the weight assigned to walks based on the number of edges, and $b > 0$ is the temporal downweighting parameter that governs how older walks are penalized. This formulation allows the matrix $S(t)$ to count walks between nodes while diminishing the importance of longer and older walks.

\begin{remark}\label{rem:matrix_log_existence}
Observe that the parameter $a>0$ can be chosen such that $\rho(A(t+\delta t)) < 1/a$ for all $0 \leq \delta t < \delta^*$. It then follows that $I - aA(t+\delta t)$ is a nonsingular $M$-matrix; see, for example,~\cite[\S 6, Definition 1.2]{BermanPlemmons}. Consequently, $(I - aA(t+\delta t))^{\delta t}$ is also an $M$-matrix~\cite{MR700883}, and therefore its inverse satisfies $(I - aA(t+\delta t))^{-\delta t} \geq 0$. Since $S(0) = 0$ and the update formula preserves nonnegativity, an induction argument shows that $S(t) \geq 0$ for all $t \geq 0$, and in particular $S(t+\delta t) \geq 0$.
\end{remark}

Taking the limit as $\delta t \to 0$ in~\eqref{eq:discrete-formulation}, the following ordinary differential equation (ODE) for the communicability matrix is obtained:
\begin{equation}\label{eq:ODE-formulation}
\dot{C}(t) = -b (C(t) - I) - C(t) \log (I - a A(t)),
\end{equation}
where $C(t) = I + S(t)$ and $\log$ denotes the matrix logarithm. In this ODE, the term $-b (C(t) - I)$ ensures that older walks are downweighted over time, while the term $-C(t) \log (I - a A(t))$ accounts for the dynamic walks utilizing edges present at time $t$.
\begin{remark}\label{rem:log-uniqueness}
    We recall that a logarithm of $M \in \mathbb{C}^{n \times n}$ is any matrix $X$ such that $e^X = M$. Any nonsingular $M$ has infinitely many logarithms. To enforce uniqueness we need to require that $M$ has no eigenvalues on $\mathbb{R}^-$ and choose the principal determination of the multivalued $\log_{\mathbb{C}}$ function~\cite[Theorem 1.31, p.20]{HighamBook}, i.e., the unique logarithm whose spectrum lies in the strip $\{ z \in \mathbb{C} \,:\, -\pi < \Im(z) < \pi \}$. For the case at hand, 
    $M = I - a A(t)$.
    Hence, the natural requirement on the $a$ parameter in~\eqref{eq:ODE-formulation} is %
    $\displaystyle a < \inf_{t} \nicefrac{1}{\rho(A(t))}$, 
    for $\rho(\cdot)$ the spectral radius. This condition ensures that $I-aA(t)$ is a nonsingular M-matrix, hence its eigenvalues lie in the open right half plane. This condition needs to be fixed at the outset before the whole computation.
\end{remark}

The matrix $C(t)$ encodes the communication capability at time $t$ between node $i$ and node $j$ through dynamic walks in the network, applying downweighting to both the length of walks and the age of information.

To derive a more efficient collective measure of the importance of the nodes in the time-evolving network from the communicability matrix $C(t)$, as in \cite{HighamNet}, we define the \textit{dynamic broadcast centrality} and \textit{dynamic receive centrality} vectors as:
\begin{equation}\label{eq:broadcast_and_receiving}
\mathbf{b}(t) = C(t) \mathbf{1} \quad \text{and} \quad \mathbf{r}(t) = C(t)^\top \mathbf{1},    
\end{equation}
where $\mathbf{1}$ is the vector of ones. These measures capture the propensity of each node to broadcast and receive information, respectively.

\subsection{Extensions and generalizations}\label{sec:extension_and_generalizations}
All these formulations can be generalized to the case 
\begin{equation}\label{eq:generalized}
    \dot{C}(t) = - b( C(t) - I ) + C(t) \log(H(A(t))),  \qquad C(0) = I,
\end{equation}
where $H(A(t))$ is a given matrix function, i.e., we consider a given combination of walk lengths in the model. A choice that greatly simplifies the construction is $H(x) = \exp(a x)$. With this choice the entire formulation of the problem is reduced to the linear case
\begin{equation}\label{eq:simplified}
    \dot{C}(t) = - b( C(t) - I ) + a C(t) A(t),  \qquad C(0) = I,
\end{equation}
if $\log(\exp(a A(t) )) = a A(t)$, which is true when $|\Im(\lambda_i( a A(t)))| < \pi$, for all $i = 1,\ldots,n$ and $t \in \mathbb{T}$, or, for any consistent matrix norm $\|a A(t)\| < \pi$ for all $t \in \mathbb{T}$; see \cite[Exercise 1.39, p.32]{HighamBook}. In the case in which $A(t) = A(t)^\top$, i.e., the network is undirected this always holds. Otherwise, we need to select $a$ in such a way that $|\Im(\lambda_i(a A(t)))| < \pi$ for all $t \in \mathbb{T}$, $i=1,\ldots,n$.

\section{Steering the dynamic centralities to prescribed values}\label{sec:the-steering-problem}

Suppose we now have access to a network that evolves over time, $G_t$, and that we can selectively intervene at each time $t$ on the weights of all or some of the edges. Our objective is to steer the network's communicability, represented in matrix form, or its broadcast and reception centrality indices, towards a desired state at a given time $T$. We can formulate these problems as optimal control problems for the ODEs~\eqref{eq:ODE-formulation} or~\eqref{eq:simplified}, or for the derived broadcast and receiving centralities~\eqref{eq:broadcast_and_receiving}. 

For the communicability matrix, given a desired state $C^* \in \mathbb{R}^{n\times n}$ at time $T$ and a set $\mathcal{P} \subseteq V \times V $,
we can express the problem as:
\begin{equation}\label{eq:matrix-formulation}
\begin{array}{rl}
    \displaystyle \min_{U} & \displaystyle \mathrm{J}(C,U) =  \frac{1}{2}\| C(T) - C^* \|_F^2 + \frac{\alpha}{2}\int_{0}^{T} \| U(t) \|_F^2 \, \mathrm{d}t, \qquad \alpha > 0\\
    \text{s.t.} &  \begin{cases}\dot{C}(t) = -b (C(t) - I) - C(t) \log \left(I - a (A(t) + U(t)) \right), \\
    C(0) = I, 
    \end{cases}
    \\
    & A(t) + U(t) \geq 0, \\
    & \rho(A(t) + U(t)) < \nicefrac{1}{a},  \\
    & [U(t)]_{i,j} = 0 \text{ if } (i,j) \not\in \mathcal{P},
\end{array}  
\end{equation}
where we require that the control $U(t)$ be a matrix of the same size as $A(t)$, maintaining a sparsity pattern contained in the predetermined set $\mathcal{P}$, that the weights of the perturbed network remain nonnegative, and that the uniqueness condition for the matrix logarithm remains satisfied along the solution trajectory as described in Remark~\ref{rem:log-uniqueness}. The $\alpha \in \mathbb{R}_{> 0}$ value in the cost functional is a Tikhonov regularization parameter of the control.

A similar formulation can be applied to the dynamic receive case, where we selected a desired receive centrality vector $\mathbf{r}^* \geq \boldsymbol{1} \in \mathbb{R}^n$:
\begin{equation}\label{eq:dynamic-receive-original}
\begin{array}{rl}
    \displaystyle \min_{U} & \displaystyle \mathrm{J}(\mathbf{r},U) =  \frac{1}{2}\| \mathbf{r}(T) - \mathbf{r}^* \|_{2}^2 + \frac{\alpha}{2}\int_{0}^{T} \| U(t) \|_F^2 \, \mathrm{d}t, \qquad \alpha > 0 \\
    \text{s.t.} &  \begin{cases}\dot{\mathbf{r}}(t) = -b (\mathbf{r}(t) - \mathbf{1}) - \log \left(I - a (A(t) + U(t)) \right)^\top \mathbf{r}(t), \\
    \mathbf{r}(0) = \mathbf{1}, \end{cases}\\
    & A(t) + U(t) \geq 0, \\
    & \rho(A(t) + U(t)) < \nicefrac{1}{a}, \\
    & [U(t)]_{i,j} = 0 \text{ if } (i,j) \not\in \mathcal{P}.
\end{array}  
\end{equation}
Observe that the broadcast vector $\mathbf{b}(t)$ tracks the information that has flowed out of each node, thus it requires access to the entries of the $C(t)$ matrix. Consequently, the receive centrality is fundamentally simpler to solve than the broadcast centrality.

If we consider the extended framework discussed in~\cite{HighamNet} with a generic matrix function $H(A(t))$ and select $H(A(t))=\exp(a A(t))$, we may employ the simplified linear model~\eqref{eq:simplified}, so as to lead us to the solution of the control problem
\begin{equation}\label{eq:dynamic-receive-simplified-original}
    \begin{array}{rl}
     \displaystyle \min_{U} & \displaystyle \mathrm{J}(\mathbf{r},U) = \frac{1}{2}\| \mathbf{r}(T) - \mathbf{r}^*\|_2^2 + \frac{\alpha}{2} \int_{0}^{T} \| U(t)\|_F^2 \mathrm{d}t, \qquad \alpha > 0 \\
     \text{ s.t. }& \begin{cases}\dot{\mathbf{r}}(t) = - b (\mathbf{r}(t) - \mathbf{1}) + a (A(t) + U(t))^\top \mathbf{r}(t),\\
     \mathbf{r}(0) = \mathbf{1}, \end{cases}\\
     & A(t) + U(t) \geq 0, \\
     & |\Im(\lambda_i(A(t) + U(t)))| < \nicefrac{\pi}{a}, \quad \forall\, i = 1,\ldots,n \\
     & [U(t)]_{i,j} = 0 \text{ if } (i,j) \not\in \mathcal{P}.
\end{array} 
\end{equation}
Observe that although we no longer compute any matrix logarithm in the state equation of~\eqref{eq:dynamic-receive-simplified-original}, to have the transformation between~\eqref{eq:generalized} and~\eqref{eq:simplified} we need the condition $\log(\exp(A)) = A$, hence the condition on the imaginary part of the spectrum. Observe also that if both $G_t$ is undirected and  that $U(t) = U(t)^\top$ for all $t \in \mathbb{T}$, then such a condition can be removed; see the discussion in Section~\ref{sec:extension_and_generalizations}. In the case of directed graphs, a sufficient condition for the constraint on the imaginary part of the eigenvalues can be stated in terms of the outdegree, as in Proposition~\ref{prop:relaxed_constraint_linear}.

\subsection{Relaxing the spectral constraints}
The constraint on the spectral radius of $A(t)+U(t)$ in~\eqref{eq:matrix-formulation} is nonlinear and nonconvex, hence its use complicates any optimization procedure one can use to solve the control problem. To obtain a more efficient procedure with a linear constraint, we consider the following relaxation which makes use of the collection of Katz centrality measures (Definition~\ref{def:katz-centrality}) for all $t \in \mathbb{T}$, and establishes first the existence of a choice of the $a$ parameter which makes Katz centrality well defined for a general time evolving graph (Definition~\ref{def:time-evolving-graph}).
\begin{lemma}\label{lem:katz_exists}
Let $A(t)$ be the adjacency matrix at time $t$. If
$
0< a < \inf_{t} \nicefrac{1}{\rho(A(t))}$,
then the Katz centrality at time $t$
$
\boldsymbol{\mu}(t) = (I - a A(t))^{-1}\mathbf{1}
$
is well defined.
\end{lemma}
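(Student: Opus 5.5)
The plan is to fix an arbitrary $t \in \mathbb{T}$ and show that $I - aA(t)$ is nonsingular, so that $\boldsymbol{\mu}(t) = (I - aA(t))^{-1}\mathbf{1}$ is a well-defined vector. I will also check that the Neumann series representation holds, since this gives nonnegativity and agreement with Definition~\ref{def:katz-centrality}.

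\textbf{Step 1: a per-time bound.} From $0 < a < \inf_t 1/\rho(A(t))$ we get $a < 1/\rho(A(t))$ for every fixed $t$, that is, $\rho(aA(t)) = a\rho(A(t)) < 1$. Two remarks are needed here.
\begin{itemize}
\item If $\rho(A(t)) = 0$, we read $1/\rho(A(t)) = +\infty$, and the inequality is trivial.
\item The infimum is strictly positive. Indeed, the standing assumption $0 \le A(t) \le \overline{A}$ together with Perron--Frobenius monotonicity of the spectral radius for nonnegative matrices gives $\rho(A(t)) \le \rho(\overline{A})$. Hence $\inf_t 1/\rho(A(t)) \ge 1/\rho(\overline{A}) > 0$, so the hypothesis is not vacuous.
\end{itemize}

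\textbf{Step 2: invertibility.} Since $\rho(aA(t)) < 1$, the number $1$ is not an eigenvalue of $aA(t)$, so $I - aA(t)$ is nonsingular. More precisely, the Neumann series $\sum_{k\ge 0} a^k A(t)^k$ converges, by Gelfand's formula $\|A^k\|^{1/k} \to \rho(A)$, and its sum is $(I - aA(t))^{-1}$. Because $A(t) \ge 0$, every term of the series is nonnegative. Therefore $(I - aA(t))^{-1} \ge 0$, and $\boldsymbol{\mu}(t) \ge \mathbf{1}$. Equivalently, $I - aA(t)$ is a nonsingular M-matrix, as in Remark~\ref{rem:matrix_log_existence}.

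\textbf{Step 3: the Katz relation.} Multiplying $\boldsymbol{\mu}(t)$ by $I - aA(t)$ gives $\boldsymbol{\mu}(t) = aA(t)\boldsymbol{\mu}(t) + \mathbf{1}$. This is exactly Definition~\ref{def:katz-centrality}, and uniqueness of the solution follows from nonsingularity.

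There is no real obstacle. The only delicate point is the degenerate case $\rho(A(t)) = 0$ and the positivity of the infimum, both handled in Step 1 using the uniform bound $\overline{A}$.
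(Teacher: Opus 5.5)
Your proof is correct and its core is exactly the paper's argument: the hypothesis gives $\rho(aA(t)) < 1$ for every $t$, so $1$ is not an eigenvalue of $aA(t)$ and $I - aA(t)$ is invertible. Your extra Neumann-series step, showing $(I-aA(t))^{-1}\ge 0$ and $\boldsymbol{\mu}(t)\ge\mathbf{1}$, is not in the paper's proof but is sound, and it supplies the property the paper later uses without proof in Proposition~\ref{prop:relaxed_constraint}.
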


\begin{proof}
For the Katz centrality to exist, it is needed that the resolvent matrix $I - aA(t)$ is invertible. If 
$
0< a < \inf_t \nicefrac{1}{\rho(A(t))}, 
$
then 
$\rho(aA(t)) < 1$ for every $t$, which implies that $I - aA(t)$ is invertible; consequently the vectors $\boldsymbol{\mu}(t)$ are well-defined.
\end{proof}

\begin{proposition}\label{prop:relaxed_constraint}
Let $a$ and $A(t)$ be as in Lemma~\ref{lem:katz_exists}, and let $\boldsymbol{\mu}(t)$ be the Katz centrality associated with the matrix $A(t)$ and parameter $a$. If $U(t)$ is such that $(I - a(A(t)+U(t)))\boldsymbol{\mu}(t) \geq \mathbf{0}$ and $A(t)+U(t) \geq 0$, then
$\rho(A(t)+U(t)) \le \nicefrac{1}{a}$.
\end{proposition}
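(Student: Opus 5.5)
The argument rests on the subinvariance form of Perron--Frobenius theory: a nonnegative matrix $B$ that admits a strictly positive vector $\mathbf{x}$ with $B\mathbf{x} \le \lambda \mathbf{x}$ satisfies $\rho(B) \le \lambda$. We apply it with $B = A(t)+U(t)$, $\mathbf{x} = \boldsymbol{\mu}(t)$ and $\lambda = 1/a$.

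\emph{Step 1: positivity of $\boldsymbol{\mu}(t)$.} By Lemma~\ref{lem:katz_exists} we have $\rho(aA(t))<1$, so the Neumann series converges and
\[
\boldsymbol{\mu}(t) = \sum_{k\ge 0} a^k A(t)^k \mathbf{1} \ge \mathbf{1} > \mathbf{0},
\]
because $A(t)\ge 0$. Thus $\boldsymbol{\mu}(t)$ is entrywise strictly positive.

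\emph{Step 2: rewrite the hypothesis.} The assumption $(I - a(A(t)+U(t)))\boldsymbol{\mu}(t)\ge \mathbf{0}$ says exactly that $B\boldsymbol{\mu}(t) \le \frac{1}{a}\boldsymbol{\mu}(t)$ with $B\ge 0$.

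\emph{Step 3: conclude by a weighted norm.} Let $D=\mathrm{diag}(\boldsymbol{\mu}(t))$, which is invertible by Step 1, and set $\widetilde B = D^{-1}BD \ge 0$. Then
\[
\widetilde B\mathbf{1} = D^{-1}B\boldsymbol{\mu}(t) \le \tfrac1a \mathbf{1}.
\]
Since $\widetilde B$ is nonnegative, its row sums equal its absolute row sums. Hence $\|\widetilde B\|_\infty \le 1/a$, and by similarity
\[
\rho(B)=\rho(\widetilde B)\le \|\widetilde B\|_\infty \le \tfrac1a .
\]
This approach avoids any irreducibility assumption on $B$. That matters because $A(t)+U(t)$ may well be reducible, and then the classical Perron--Frobenius subinvariance theorem could not be invoked directly.

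The only delicate point is the strict positivity of $\boldsymbol{\mu}(t)$, which is what makes $D$ invertible. Step 1 settles it through the Neumann series. Note that the resulting bound is non-strict, matching the statement. The strict inequality $\rho<1/a$ required in~\eqref{eq:matrix-formulation} would need a strict inequality in the hypothesis, which Step 3 would then deliver by the same argument.
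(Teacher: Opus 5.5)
Your proof is correct and follows the paper's own argument: you conjugate by $D=\operatorname{diag}(\boldsymbol{\mu}(t))$, which turns the hypothesis into the row-sum bound $\|D^{-1}(A(t)+U(t))D\|_\infty \le 1/a$, and then use similarity to bound the spectral radius. Your Neumann-series argument for $\boldsymbol{\mu}(t)\ge\mathbf{1}$ also supplies a justification the paper only asserts.
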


\begin{proof}
From the assumption we have $(I - a(A(t)+U(t)))\boldsymbol{\mu}(t) \geq \mathbf{0}$, which is equivalent to $a(A(t)+U(t))\boldsymbol{\mu}(t)\leq \boldsymbol{\mu}(t)$.
Since $\boldsymbol{\mu}(t) \ge \mathbf{1} > \mathbf{0}$, we define $D_{\boldsymbol{\mu}(t)} = \operatorname{diag}( \boldsymbol{\mu}(t))$ and find that
\[
a\;D_{\boldsymbol{\mu}(t)}^{-1}
(A(t)+U(t))\;
D_{\boldsymbol{\mu}(t)} \mathbf{1} \le \mathbf{1}.
\]
Since the matrix $A(t)+U(t)$ is nonnegative, the above inequality shows that 
\[a\;\|D_{\boldsymbol{\mu}(t)}^{-1}
(A(t)+U(t))\;
D_{\boldsymbol{\mu}(t)}\|_\infty \le 1,\] hence
$
a\rho(D_{\boldsymbol{\mu}(t)}^{-1}
(A(t)+U(t))\;
D_{\boldsymbol{\mu}(t)})=a \rho(A(t)+U(t))\le 1$.
\end{proof}

According to
Proposition~\ref{prop:relaxed_constraint},
the linear inequality
$
(I - a(A(t)+U(t)))\boldsymbol{\mu}(t) \geq \mathbf{0}$,  
which is easier  to handle computationally than the nonlinear  spectral radius constraint,
guarantees that
$\rho((A(t)+U(t))\le \nicefrac{1}{a}$.
To provide a sufficient condition guaranteeing that the strict inequality $\rho(A(t)+U(t)) < \nicefrac{1}{a}$ is satisfied, we then need to just add a safety guarantee of a small $\frac{1}{a} \gg \epsilon>0$, asking that $U(t) \boldsymbol{\mu}(t) \leq \left( \nicefrac{1}{a} - \epsilon \right)\boldsymbol{1}$, where we used the fact that $(I-aA(t))\boldsymbol{\mu}(t) = \mathbf{1}$.

We need a relaxation also for the constraint of the imaginary part of the spectrum in problem~\eqref{eq:dynamic-receive-simplified-original} in the case of directed graphs. We can employ the observation on the consistent matrix norms made in Section~\ref{sec:extension_and_generalizations}.
\begin{proposition}\label{prop:relaxed_constraint_linear}
Let $a > 0$ and $A(t)\ge 0$. If $U(t)$ is such that $A(t) + U(t) \geq 0 $ and
\begin{equation}\label{eq:linear_constraint_for_im_part}
     (A(t) + U(t)) \mathbf{1} \leq \frac{\pi}{a} \mathbf{1}, 
\end{equation}
then $\left| \Im( \lambda_i(a (A(t)+U(t)) ) \right|\le \pi$.
\end{proposition}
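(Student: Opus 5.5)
The argument is a short chain of inequalities. It turns the row-sum condition~\eqref{eq:linear_constraint_for_im_part} into a bound on the infinity norm, then bounds the spectral radius, and then bounds the imaginary parts of the eigenvalues. It mirrors the proof of Proposition~\ref{prop:relaxed_constraint}, with $D_{\boldsymbol{\mu}(t)}=I$ and $\nicefrac{\pi}{a}$ in place of $\nicefrac{1}{a}$.

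Set $M(t) = A(t)+U(t)$, which is nonnegative by hypothesis. For a nonnegative matrix the absolute row sums coincide with the entries of $M(t)\mathbf{1}$, so
\[
\|M(t)\|_\infty = \max_i \bigl(M(t)\mathbf{1}\bigr)_i .
\]
Condition~\eqref{eq:linear_constraint_for_im_part} then gives $\|M(t)\|_\infty \le \nicefrac{\pi}{a}$. Multiplying by $a>0$ yields $\|a M(t)\|_\infty \le \pi$.

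Next I invoke the standard fact that the spectral radius is bounded by any consistent (induced) matrix norm, which gives $\rho(aM(t)) \le \|aM(t)\|_\infty \le \pi$. Every eigenvalue $\lambda_i$ of $aM(t)$ satisfies $|\Im(\lambda_i)| \le |\lambda_i| \le \rho(aM(t))$, and the claim $|\Im(\lambda_i(a(A(t)+U(t))))|\le\pi$ follows.

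The only delicate point is the interplay between the non-strict conclusion and the strict requirement $|\Im(\lambda_i)|<\pi$ needed in Section~\ref{sec:extension_and_generalizations}. The statement asks only for $\le\pi$, so the chain above suffices as written. If strictness is wanted, I would add a safety margin $\epsilon>0$, exactly as in the discussion after Proposition~\ref{prop:relaxed_constraint}, by requiring $M(t)\mathbf{1}\le(\nicefrac{\pi}{a}-\epsilon)\mathbf{1}$. I expect no real obstacle.
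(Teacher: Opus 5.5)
Your proof is correct and follows the paper's argument: nonnegativity turns the row-sum condition into $\|a(A(t)+U(t))\|_\infty \le \pi$, and the eigenvalue bound follows because this norm dominates the spectral radius. The paper compresses your last two steps ($\rho \le \|\cdot\|_\infty$ and $|\Im\lambda_i| \le |\lambda_i|$) into the phrase that $\|\cdot\|_\infty$ is a compatible norm, and your remark on strictness matches the $\varepsilon$-margin the paper adds right after the proposition.
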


\begin{proof}
If $A(t) + U(t) \geq 0$, then the condition~\eqref{eq:linear_constraint_for_im_part} implies that $\| a (A(t) + U(t)) \|_\infty \leq \pi$, hence that $\left|\Im( \lambda_i(a (A(t)+U(t)) ) \right| \le \pi$, since $\|\cdot\|_\infty$ is a compatible norm.
\end{proof}

As for the other relaxation, we need to add a safety guarantee of a small $\frac{\pi}{a} \gg \varepsilon > 0$, asking that
\[
(A(t) + U(t)) \mathbf{1} \leq \left( \frac{\pi}{a}  - \varepsilon \right) \mathbf{1},
\]
observe also that in network terms, this amounts to a constraint on the out-degree of the network. An immediate corollary of Proposition~\ref{prop:relaxed_constraint_linear} is then obtained by changing from out-degree to in-degree and from the compatible norm $\|\cdot\|_\infty$ to the compatible norm $\|\cdot\|_1$.
\begin{corollary}
Let $a > 0$ and $A(t)\ge 0$. If $U(t)$ is such that $A(t) + U(t) \geq 0 $ and
     $\mathbf{1}^\top (A(t) + U(t))  \leq \nicefrac{\pi}{a} \mathbf{1}^\top$, 
then $\left| \Im( \lambda_i(a (A(t)+U(t)) ) \right| \le \pi$.
\end{corollary}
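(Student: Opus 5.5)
The plan is to mirror the proof of Proposition~\ref{prop:relaxed_constraint_linear}, replacing the row-sum norm $\|\cdot\|_\infty$ with the column-sum norm $\|\cdot\|_1$. Set $M(t) = a(A(t)+U(t))$. The first step uses the hypothesis $A(t)+U(t)\geq 0$: every entry of $M(t)$ is nonnegative, so the absolute values in the definition of the induced $1$-norm can be dropped. This gives
\[
\|M(t)\|_1 = \max_{j} \sum_{i} |[M(t)]_{ij}| = a \max_j \bigl(\mathbf{1}^\top (A(t)+U(t))\bigr)_j .
\]
The assumed componentwise inequality $\mathbf{1}^\top (A(t)+U(t)) \leq \nicefrac{\pi}{a}\,\mathbf{1}^\top$ then yields $\|M(t)\|_1 \le \pi$. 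In network terms, this is a bound on the weighted in-degrees.

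The second step invokes the standard fact that any induced (hence consistent) matrix norm bounds the spectral radius. Concretely, if $M(t)\mathbf{x} = \lambda \mathbf{x}$ with $\mathbf{x}\neq \mathbf{0}$, then $|\lambda|\,\|\mathbf{x}\|_1 = \|M(t)\mathbf{x}\|_1 \le \|M(t)\|_1 \|\mathbf{x}\|_1$, so $|\lambda| \le \|M(t)\|_1$. Equivalently, one can apply Proposition~\ref{prop:relaxed_constraint_linear} directly to the transposed matrix $(A(t)+U(t))^\top$ with the control $U(t)^\top$ and data $A(t)^\top \geq 0$. Its row sums are exactly the column sums of $A(t)+U(t)$, and transposition does not change the eigenvalues.

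Finally, for every eigenvalue $\lambda_i$ of $M(t)$ we have $|\Im(\lambda_i)| \le |\lambda_i| \le \rho(M(t)) \le \pi$, which is the claim. No step is a real obstacle. The only point needing care is the first step, where dropping the absolute values in the column sums relies on the nonnegativity hypothesis $A(t)+U(t)\ge 0$.
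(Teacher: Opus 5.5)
Your proposal is correct and follows essentially the paper's route: the paper obtains the corollary by rerunning the argument of Proposition~\ref{prop:relaxed_constraint_linear} with the column-sum norm $\|\cdot\|_1$ in place of $\|\cdot\|_\infty$ (in-degree instead of out-degree). Your write-up spells out the steps the paper leaves implicit: nonnegativity of $A(t)+U(t)$ turns the column-sum hypothesis into $\|a(A(t)+U(t))\|_1\le\pi$, and $|\Im(\lambda_i)|\le\rho\le\pi$ follows; your transpose reformulation is an equally valid way of saying the same thing.
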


We can therefore rewrite the two problems~\eqref{eq:dynamic-receive-original}, \eqref{eq:dynamic-receive-simplified-original} using the simplified bounds, i.e., for the case of~\eqref{eq:dynamic-receive-original}, we rewrite it as
\begin{equation}\label{eq:dynamic-receive}
\begin{array}{rl}
    \displaystyle \min_{U} & \displaystyle \mathrm{J}(\mathbf{r},U) =  \frac{1}{2}\| \mathbf{r}(T) - \mathbf{r}^* \|_{2}^2 + \frac{\alpha}{2}\int_{0}^{T} \| U(t) \|_F^2 \, \mathrm{d}t, \qquad \alpha > 0, \; \epsilon > 0 \\
    \text{s.t.} &  \begin{cases}\dot{\mathbf{r}}(t) = -b (\mathbf{r}(t) - \mathbf{1}) - \log \left(I - a (A(t) + U(t)) \right)^\top \mathbf{r}(t), \\
    \mathbf{r}(0) = \mathbf{1}, \end{cases}\\
    & A(t) + U(t) \geq 0, \\
   &  U(t) \boldsymbol{\mu}(t) \leq \left( \nicefrac{1}{a} - \epsilon \right)\boldsymbol{1}, \\
    & [U(t)]_{i,j} = 0 \text{ if } (i,j) \not\in \mathcal{P},
\end{array}  
\end{equation}
while we simplify the constraints in \eqref{eq:dynamic-receive-simplified-original} for a directed graph as
\begin{equation}\label{eq:dynamic-receive-simplified}
    \begin{array}{rl}
     \displaystyle \min_{U} & \displaystyle \mathrm{J}(\mathbf{r},U) = \frac{1}{2}\| \mathbf{r}(T) - \mathbf{r}^*\|_2^2 + \frac{\alpha}{2} \int_{0}^{T} \| U(t)\|_F^2 \mathrm{d}t, \qquad \alpha > 0, \; \epsilon > 0 \\
     \text{ s.t. }& \begin{cases}\dot{\mathbf{r}}(t) = - b (\mathbf{r}(t) - \mathbf{1}) + a (A(t) + U(t))^\top \mathbf{r}(t),\\
     \mathbf{r}(0) = \mathbf{1}, \end{cases}\\
     & A(t) + U(t) \geq 0, \\
     &    (A(t) + U(t)) \mathbf{1} \leq \left( \frac{\pi}{a} -\epsilon\right) \mathbf{1}, \\
     & [U(t)]_{i,j} = 0 \text{ if } (i,j) \not\in \mathcal{P},
\end{array} 
\end{equation}
while for an undirected graph, where $A(t) = A^\top(t)$ for all times, we have
\begin{equation}\label{eq:dynamic-receive-simplified-undirected}
    \begin{array}{rl}
     \displaystyle \min_{U} & \displaystyle \mathrm{J}(\mathbf{r},U) = \frac{1}{2}\| \mathbf{r}(T) - \mathbf{r}^*\|_2^2 + \frac{\alpha}{2} \int_{0}^{T} \| U(t)\|_F^2 \mathrm{d}t, \qquad \alpha > 0 \\
     \text{ s.t. }& \begin{cases}\dot{\mathbf{r}}(t) = - b (\mathbf{r}(t) - \mathbf{1}) + a (A(t) + U(t))^\top \mathbf{r}(t),\\
     \mathbf{r}(0) = \mathbf{1}, \end{cases}\\
     & A(t) + U(t) \geq 0, \\
     & [U(t)]_{i,j} = 0 \text{ if } (i,j) \not\in \mathcal{P}, \\
     & U(t) = U^\top(t).
\end{array} 
\end{equation}

We can prove that the admissible set of controls in the three cases are not empty under reasonable choices of the modification pattern $\mathcal{P}$.

\begin{proposition}\label{pro:nonemptyness_logarithm}
The admissible set for problem~\eqref{eq:dynamic-receive}
\begin{equation}\label{eq:admset_original}
\mathcal{U}_{ad}(t) = \left\{ U(t) \in \mathbb{R}^{n \times n} \;\middle|\;   \begin{aligned}  & A(t) + U(t) \geq 0, \\
   &  U(t) \boldsymbol{\mu}(t) \leq \left( \nicefrac{1}{a} - \epsilon \right)\boldsymbol{1}, \\
    & [U(t)]_{i,j} = 0 \text{ if } (i,j) \not\in \mathcal{P} \end{aligned} \right\rbrace,
\end{equation}
is non trivial for $\emptyset\neq\mathcal{P}\subseteq V \times V$, i.e., $\mathcal{U}_{ad}(t) \setminus \{O\} \neq \emptyset$ for $O$ the zero matrix.
\end{proposition}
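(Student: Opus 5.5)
The plan is to exhibit an explicit nonzero control supported on a single pair of $\mathcal{P}$. Since $\mathcal{P}\neq\emptyset$, fix $(i,j)\in\mathcal{P}$ and set $U(t)=\eta\,\mathbf{e}_i\mathbf{e}_j^\top$ for a scalar $\eta>0$ to be chosen. Choosing $\eta$ positive means adding weight to an edge, which is simpler than removing one. In particular, we do not need $[A(t)]_{ij}>0$, which would be required if we tried to decrease an existing weight.

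We then check the three constraints in~\eqref{eq:admset_original} in turn.

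\emph{Pattern constraint.} $U(t)$ vanishes outside the single entry $(i,j)\in\mathcal{P}$, so it holds.

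\emph{Nonnegativity.} $A(t)\geq 0$ by Definition~\ref{def:time-adjacency}, since the weights are positive or zero. Adding a nonnegative matrix keeps $A(t)+U(t)\geq 0$.

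\emph{Linear Katz constraint.} We have $U(t)\boldsymbol{\mu}(t)=\eta\,\mu_j(t)\,\mathbf{e}_i$. Every component except the $i$-th is $0$, which is at most $\nicefrac{1}{a}-\epsilon$ because $\epsilon\ll\nicefrac1a$. The $i$-th component requires
\[
0<\eta\leq \frac{\nicefrac{1}{a}-\epsilon}{\mu_j(t)}.
\]
This bound is well defined and positive: by Lemma~\ref{lem:katz_exists}, $\boldsymbol{\mu}(t)$ exists, and $\boldsymbol{\mu}(t)=\sum_k a^k A(t)^k\mathbf{1}\geq\mathbf{1}$ is finite.

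Hence any $\eta$ in this interval gives $U(t)\in\mathcal{U}_{ad}(t)\setminus\{O\}$.

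The only delicate point is making $\eta$ uniform in $t$, if one wants a single time-independent choice, for instance to have a measurable nontrivial control on $[0,T]$. For this I would bound $\mu_j(t)$ uniformly using the standing assumption $A(t)\leq\overline{A}$. By monotonicity of the Neumann series, $\mu_j(t)\leq[(I-a\overline{A})^{-1}\mathbf{1}]_j$ whenever $a\rho(\overline{A})<1$. Otherwise I would simply take $\eta(t)=(\nicefrac1a-\epsilon)/\mu_j(t)$ pointwise; this is measurable since $A(t)$ is piecewise constant. The statement is pointwise in $t$, so the pointwise choice already suffices.
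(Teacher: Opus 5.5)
Your proposal is correct and is essentially the paper's argument. The paper takes $U(t)=DH$, with $H$ the $0/1$ indicator matrix of $\mathcal{P}$ and $D=\operatorname{diag}(x_1,\ldots,x_n)\ge 0$, and bounds $x_i$ by $(\nicefrac{1}{a}-\epsilon)/S_i(t)$ with $S_i(t)=\sum_j h_{i,j}\mu_j(t)$. This is your single-entry construction with a whole row of the pattern switched on instead of one entry, and it relies on the same three facts: $A(t)\ge 0$, $\boldsymbol{\mu}(t)\ge\mathbf{1}$, and $\nicefrac{1}{a}-\epsilon>0$. Your remarks on uniformity in $t$ go beyond what the pointwise statement requires, but they are harmless.
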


\begin{proof}
    Let $H$ be the $n \times n$ binary indicator matrix representing the sparsity pattern $\mathcal{P}$, such that $h_{i,j} = 1$ if $(i,j) \in \mathcal{P}$ and $h_{i,j} = 0$ otherwise. We seek to construct a matrix $U(t) \in \mathcal{U}_{ad}(t)$ of the form $U(t) = D H$, where $D = \operatorname{diag}(x_1, \dots, x_n)$ with $x_i \geq 0$ for all $i \in \{1, \dots, n\}$. By construction, $[U(t)]_{i,j} = x_i h_{i,j} = 0$ if $(i,j) \notin \mathcal{P}$, which satisfies the sparsity constraint.
    Since the matrix $A(t)$ has non-negative entries, the condition $A(t) + U(t) \geq 0$ is rewritten as $a_{i,j}(t) + x_i h_{i,j} \geq 0$ for all $i,j$. Since we restrict $x_i \geq 0$ and $h_{i,j} \geq 0$, the term $x_i h_{i,j}$ is non-negative, meaning the inequality is satisfied.

    Next, we consider the constraint $U(t) \boldsymbol{\mu}(t) \leq \left( \nicefrac{1}{a} - \epsilon \right)\boldsymbol{1}$. For the $i$-th row, this gives
    \[
    x_i \sum_{j=1}^n h_{i,j} \mu_j(t) \leq \nicefrac{1}{a} - \epsilon.
    \]
    Let $S_i(t) = \sum_{j=1}^n h_{i,j} \mu_j(t)$. Provided that $\nicefrac{1}{a} - \epsilon  > 0$, this inequality is satisfied for each row $i$ if either
    $S_i(t) = 0$---for which any $x_i \geq 0$ is a valid choice---or, if $S_i(t) > 0$---then, the condition is satisfied for any $x_i$ chosen such that $0 \leq x_i \leq \nicefrac{\left(\nicefrac{1}{a} - \epsilon\right)}{S_i(t)}$.
    To prove that $\mathcal{U}_{ad}(t)$ is non-trivial ($\mathcal{U}_{ad}(t) \setminus \{O\} \neq \emptyset$), we must show there exists a valid $U(t)$ other than the zero matrix. Because $\mathcal{P} \neq \emptyset$, the matrix $H$ has at least one non-zero entry $h_{i,j}$, and we can choose $0<x_i\le \nicefrac{\left(\nicefrac{1}{a} - \epsilon\right)}{S_i(t)}$, so that $U(t)\ne O$. Therefore, the admissible set is non-trivial.
\end{proof}

We introduce the admissible control set for the linear model for the general case of directed graphs~\eqref{eq:dynamic-receive-simplified}:
\begin{equation}
    \label{eq:adm_set_directed}
    \mathcal{U}_{ad}(t) = \left\{ U(t) \in \mathbb{R}^{n \times n} \;\middle|\; \begin{aligned} &  U(t) \ge -A(t), \\ & U(t) \mathbf{1} \leq \left( \frac{\pi}{a}  - \varepsilon \right) \mathbf{1} - A(t)\mathbf{1}, \\  & [U(t)]_{i,j} = 0 \text{ if } (i,j) \notin \mathcal{P} \end{aligned}  \right\},
\end{equation}
while for undirected graphs case~\eqref{eq:dynamic-receive-simplified-undirected} we have
\begin{equation}
    \label{eq:adm_set}
    \mathcal{U}_{ad}(t) = \left\{ U(t) \in \mathbb{R}^{n \times n} \;\middle|\; \begin{aligned}  &  U(t) \ge -A(t), \\ & U(t) = U^\top(t) , \\ & [U(t)]_{i,j} = 0 \text{ if } (i,j) \notin \mathcal{P} \end{aligned}   \right\},
\end{equation}
which are not empty under a stricter assumption on the pattern of the admissible modifications.
\begin{proposition}\label{pro:non_emptyness_linear}
The sets~\eqref{eq:adm_set_directed} and~\eqref{eq:adm_set} are non trivial for $\mathcal{P} = \cup_{t \in \mathbb{T}} E_t$, i.e., $\mathcal{U}_{ad}(t) \setminus \{O\} \neq \emptyset$ for $O$ the zero matrix.

\end{proposition}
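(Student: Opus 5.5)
The plan is to exhibit an explicit nonzero admissible control. The natural candidate uses the fact that the pattern $\mathcal{P} = \cup_{t\in\mathbb{T}} E_t$ contains the support of $A(t)$ at every time, since $E_t \subseteq \mathcal{P}$. So I would take a rescaling of the current network, $U(t) = -\theta A(t)$ with $\theta \in (0,1]$. This choice needs $A(t)\neq O$. If $E_t=\emptyset$ at some $t$, I would instead use an entry $(i,j)\in\mathcal{P}$ coming from another time, since $\mathcal{P}\neq\emptyset$ whenever the network has any edge at all. I would state this case split explicitly, and assume as in the statement that $\mathcal{P}$ is nonempty.

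\emph{Case $A(t)\neq O$.} For $U(t) = -\theta A(t)$, the three constraints are checked in turn.
\begin{itemize}
\item The sparsity constraint holds because $[U(t)]_{i,j}\neq 0$ only if $(i,j)\in E_t\subseteq\mathcal{P}$.
\item Nonnegativity holds because $A(t)+U(t) = (1-\theta)A(t)\ge 0$ for $\theta\le 1$.
\item For the directed set~\eqref{eq:adm_set_directed}, the row-sum constraint becomes $(1-\theta)A(t)\mathbf{1} \le (\pi/a-\varepsilon)\mathbf{1}$. Taking $\theta=1$ makes the left side zero. So the inequality holds as soon as $\pi/a - \varepsilon \ge 0$, which is guaranteed by the standing assumption $\varepsilon \ll \pi/a$.
\item For the undirected set~\eqref{eq:adm_set}, symmetry of $U(t)$ follows from $A(t)=A(t)^\top$.
\end{itemize}
Hence $U(t) = -A(t)\neq O$ is admissible, which amounts to deleting all current edges.

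\emph{Case $A(t)=O$.} Here I would pick $(i,j)\in\mathcal{P}$ and a small $\delta>0$.
\begin{itemize}
\item In the directed case, set $U(t)=\delta e_ie_j^\top$. It is nonnegative and respects the pattern. Its row sums are $\delta e_i \le (\pi/a-\varepsilon)\mathbf{1}$ provided $\delta\le \pi/a-\varepsilon$, which is positive.
\item In the undirected case, set $U(t)=\delta(e_ie_j^\top+e_je_i^\top)$, or $\delta e_ie_i^\top$ if $i=j$. Here $(j,i)\in\mathcal{P}$ because the edge sets are reciprocated. This matrix is symmetric and nonnegative, and the undirected set carries no upper bound.
\end{itemize}

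The main point needing care is not any estimate but the positivity $\pi/a-\varepsilon>0$. I would settle it by invoking the safety-margin convention stated after Proposition~\ref{prop:relaxed_constraint_linear}.
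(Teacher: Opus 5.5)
Your proposal is correct and follows essentially the paper's route. The paper also builds the control from the current adjacency matrix, whose support lies in $E_t\subseteq\mathcal{P}$: it takes $U(t)=DA(t)$ with $D$ diagonal in the directed case, of which your $U(t)=-A(t)$ is the instance $D=-I$, and $U(t)=DA(t)D$ with constant-sign diagonal entries in the undirected case. Your separate treatment of an empty snapshot $A(t)=O$, with the explicit requirement $\mathcal{P}\neq\emptyset$, is a genuine improvement. The paper's constructions $DA(t)$ and $DA(t)D$ reduce to the zero matrix at such a snapshot, so its argument does not establish nontriviality there.
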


\begin{proof}
We prove first the  case~\eqref{eq:adm_set_directed}. Let us define $D = \operatorname{diag}(x_1,\ldots,x_n)$ and search for a $U(t) \in \mathcal{U}_{ad}(t)$ of the form $U(t) = DA(t)$. The conditions~\eqref{eq:adm_set_directed} are then equivalent to
\[
\begin{split}
    U(t) + A(t) \geq 0 \,\Leftrightarrow\,& (I + D)A(t) \geq 0 \,\Leftrightarrow \, (1 + x_i) a_{i,j}(t) \geq 0 \\
 U(t) \mathbf{1} \leq \left( \frac{\pi}{a}  - \varepsilon \right) \mathbf{1} - A(t) \mathbf{1}  \,\Leftrightarrow\,& x_i d_i \leq \frac{\pi}{a}-\epsilon - d_i, \text{ with } d_i = (A(t)\mathbf{1})_i.   
\end{split}
\]
If $d_i=0$, i.e, $a_{i,j}(t)=0$ for any $j$, then for any choice of $x_i$ the above inequalities are satisfied.
If $d_i> 0$ then we can choose $x_i\ge -1$ and $x_i \leq \frac{\frac{\pi}{a}-\epsilon}{d_i} - 1$; since $\nicefrac{\left(\frac{\pi}{a}-\epsilon\right)}{d_i} > 0$, there are infinite values of $x_i$ satisfying both the inequalities.
By construction, $[U(t)]_{i,j} = 0$ if  $(i,j) \not\in \mathcal{P}$.
For~\eqref{eq:adm_set} we look for $U(t) = D A(t) D$, $\operatorname{diag}(x_1,\ldots,x_n)$, and we need to prove
\[
\begin{split}
    U(t) + A(t) \geq 0 \,\Leftrightarrow\,&   D A(t) D + A(t) \geq 0 \,\Leftrightarrow \, x_i a_{i,j}(t) x_j \geq - a_{i,j}(t), %
\end{split}
\]
if $a_{i,j}(t) = 0$, then the inequality is verified for every choice of $D$. If $a_{i,j}(t) > 0$, the condition becomes $x_i x_j \geq -1$, which is satisfied for every $(x_1,\ldots,x_n)$ with constant sign. Again, by construction, $[U(t)]_{i,j} = 0$ if  $(i,j) \not\in \mathcal{P}$.
\end{proof}

Propositions~\ref{pro:nonemptyness_logarithm} and~\ref{pro:non_emptyness_linear} suggest that a reasonable choice for the modification pattern $\mathcal{P}$ is therefore the collection of the patterns of the $A(t)$ for all $t \in \mathbb{T}$: from a modeling perspective, this choice means that we want to modify edges that existed at some point in the network's temporal evolution.

\subsection{Existence of the optimal control}

To guarantee that the optimal control problems formulated in the previous sections are well-posed, we must establish the existence of an optimal control $U^*(t)$ and its corresponding state trajectory. We state the existence result focusing on the simplified linear model~\eqref{eq:dynamic-receive-simplified}, utilizing the classic Filippov-Cesari existence theorem for Bolza problems~\cite[Theorem 9.3i]{cesari1983optimization}.

\begin{theorem}
For the dynamic receive optimal control problems defined in
\eqref{eq:dynamic-receive-simplified} and
\eqref{eq:dynamic-receive-simplified-undirected}, there exists an optimal control
$U^*(t) \in \mathcal{U}_{\mathrm{ad}}(t)$, where
$\mathcal{U}_{\mathrm{ad}}(t)$ denotes the admissible control set defined in
\eqref{eq:adm_set_directed} and \eqref{eq:adm_set}, respectively, together with an absolutely continuous optimal state trajectory
$\mathbf{r}^*(t)$ for $t \in [0,T]$.
\end{theorem}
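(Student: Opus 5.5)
We verify the hypotheses of the Filippov--Cesari theorem \cite[Theorem 9.3i]{cesari1983optimization} for the Bolza problem with state $\mathbf{r}\in\mathbb{R}^n$, control $U\in\mathbb{R}^{n\times n}$, dynamics $f(t,\mathbf{r},U) = -b(\mathbf{r}-\mathbf{1}) + a(A(t)+U)^\top\mathbf{r}$, running cost $f_0(U)=\frac{\alpha}{2}\|U\|_F^2$, terminal cost $g(\mathbf{r}(T))=\frac12\|\mathbf{r}(T)-\mathbf{r}^*\|_2^2$, fixed initial point $\mathbf{r}(0)=\mathbf{1}$ and free endpoint on the fixed interval $[0,T]$.

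\textbf{Step 1: the admissible set is nonempty.} The zero control is admissible in both~\eqref{eq:adm_set_directed} and~\eqref{eq:adm_set}. For~\eqref{eq:adm_set_directed} this is implicit in the problem setting: it requires $A(t)\mathbf{1}\le(\pi/a-\varepsilon)\mathbf{1}$, which holds once $a$ is chosen as in Section~\ref{sec:extension_and_generalizations}. For~\eqref{eq:adm_set} it is immediate, and Proposition~\ref{pro:non_emptyness_linear} gives nontrivial elements as well. Since the state equation is linear with $L^1$ (indeed bounded) coefficients, any such control yields an absolutely continuous trajectory. The cost is therefore finite and the infimum is finite and nonnegative.

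\textbf{Step 2: structural hypotheses.} The data $A(t)$ are piecewise constant and bounded by $\overline{A}$, so $f$ is measurable in $t$ and continuous in $(\mathbf{r},U)$, and $f_0$, $g$ are continuous. Each $\mathcal{U}_{ad}(t)$ is closed and convex, being an intersection of half-spaces and linear subspaces. Its graph is closed, or at least measurable, since $A(t)$ is piecewise constant. The main point is the convexity condition (Cesari's property (Q)). The augmented velocity set
\[
\tilde Q(t,\mathbf{r}) = \{(z^0,z): z^0\ge \tfrac{\alpha}{2}\|U\|_F^2,\ z=f(t,\mathbf{r},U),\ U\in\mathcal{U}_{ad}(t)\}
\]
is convex, because $f$ is affine in $U$, $f_0$ is convex and $\mathcal{U}_{ad}(t)$ is convex. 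Convexity of the epigraph-type set in the control-affine, convex-cost setting is the classical route to property (Q), and I would cite it as such. Upper semicontinuity in $\mathbf{r}$ follows from continuity of $f$.

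\textbf{Step 3: compactness.} This is the step I expect to be the main obstacle. Neither admissible set is bounded in an obvious way: in~\eqref{eq:adm_set} entries of $U$ in $\mathcal{P}$ can grow without bound. The coercive growth condition replaces boundedness. We have $f_0(U)\ge \frac{\alpha}{2}\|U\|_F^2$, while $|f|\le C(1+\|U\|_F)|\mathbf{r}|+C$. So along a minimizing sequence $\int_0^T\|U_k\|_F^2$ is uniformly bounded, and by Gronwall $\|\mathbf{r}_k\|_\infty$ is uniformly bounded. This gives the growth condition $(\phi)$ of Cesari, or equivalently the superlinearity of $f_0$ relative to $|f|$ on bounded state sets. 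With this, the trajectories $\mathbf{r}_k$ are equi-absolutely continuous.

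\textbf{Step 4: passing to the limit.} Extract a subsequence with $\mathbf{r}_k\to\mathbf{r}^*$ uniformly and $U_k\rightharpoonup U^*$ weakly in $L^2$. Weak closedness of the convex, pointwise-constrained set gives $U^*(t)\in\mathcal{U}_{ad}(t)$ a.e. The bilinear term $U_k^\top\mathbf{r}_k$ converges weakly to $U^{*\top}\mathbf{r}^*$, as a weak-times-strong product. Hence $\mathbf{r}^*$ solves the state equation with $U^*$. Weak lower semicontinuity of the $L^2$ norm and continuity of $g$ then show that $U^*$ attains the infimum. This direct argument can serve as a self-contained check of the conclusion of Theorem 9.3i.
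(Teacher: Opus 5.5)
Your argument is correct, but it reaches the conclusion by a different route from the paper. The paper applies the Filippov--Cesari theorem with \emph{compact} control sets. In the directed case it uses that \eqref{eq:adm_set_directed} is already bounded: $A(t)+U\ge 0$ together with $(A(t)+U)\mathbf{1}\le(\pi/a-\varepsilon)\mathbf{1}$ forces $0\le (A(t)+U)_{ij}\le \pi/a-\varepsilon$. In the undirected case it intersects \eqref{eq:adm_set} with a Frobenius ball of radius $R$, asserting that coercivity of $\frac{\alpha}{2}\|U\|_F^2$ makes the optimal control essentially bounded. It then only checks Carath\'eodory well-posedness and convexity of the extended velocity set.

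You instead keep the unbounded set and replace compactness by the growth condition. You then close the argument with the direct method: an $L^2$ bound on a minimizing sequence, Gronwall, equi-absolute continuity, weak $L^2$ compactness, Mazur, weak--strong convergence of $U_k^\top\mathbf{r}_k$, and weak lower semicontinuity.

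The paper's version is shorter and, if the truncation were justified, would give an essentially bounded optimal control. That truncation step is asserted rather than proved, however. Coercivity of an integral cost bounds only $\int_0^T\|U\|_F^2\,\mathrm{d}t$, not $\|U(t)\|_F$ pointwise, and choosing $R$ from ``the optimal control'' presupposes its existence. Your route avoids any a priori $L^\infty$ bound and treats both problems uniformly. It also copes with the fact that $f$ is only measurable (piecewise constant) in $t$, whereas the paper claims continuity in $t$. The price is that you obtain $U^*\in L^2$ rather than $L^\infty$, which is all the statement requires.

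Two small corrections. First, ``neither admissible set is bounded'' is inaccurate. The set \eqref{eq:adm_set_directed} is bounded by the estimate above, so the directed case is a direct application of the compact-control theorem. Second, in Step~1 the zero control need not be admissible for \eqref{eq:adm_set_directed}. The choice of $a$ in Section~\ref{sec:extension_and_generalizations} restricts the imaginary parts of the eigenvalues of $aA(t)$, not the row sums of $A(t)$. This is harmless, because you only need \emph{some} admissible control of finite cost to bound the minimizing sequence. Examples are $U(t)=-A(t)$ when $\mathcal{P}\supseteq\bigcup_t E_t$, or the element supplied by Proposition~\ref{pro:non_emptyness_linear}.
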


\begin{proof}
We verify the required conditions of the Filippov-Cesari theorem (\cite[Theorem 9.3i]{cesari1983optimization}) step by step. For every admissible control
\(U(t)\), we define
\[
    f(t,\mathbf r,U)
    :=
    - b(\mathbf r-\mathbf 1)
    + a(A(t)+U)^\top \mathbf r .
\]
Thus the controlled dynamics can be written compactly as
\[
    \dot{\mathbf r}(t)=f(t,\mathbf r(t),U(t)),
    \qquad \mathbf r(0)=\mathbf 1 .
\]

Under the linear constraints of~\eqref{eq:adm_set}, the control is bounded from below by the constraint $U(t) \ge -A(t)$, but remains unbounded from above. Thus, $\mathcal{U}_{ad}(t)$ is a closed, convex, but unbounded set. However, the running cost $L(U) = \frac{\alpha}{2}\|U\|_F^2$ is strictly convex and coercive. The coercivity guarantees that the optimal control is bounded, meaning there exists a sufficiently large constant $R > 0$ such that the optimal control must satisfy $\|U(t)\|_F \le R$ almost everywhere. Without loss of generality, we can intersect our original control set with this closed ball to define an effective control set $\widetilde{\mathcal{U}}_{ad}(t) = \mathcal{U}_{ad}(t) \cap \{ U \mid \|U\|_F \le R \}$. This effective set is both closed and bounded in a finite-dimensional space, satisfying the compactness requirement. Under the constraint~\eqref{eq:adm_set_directed}, we can take instead $\widetilde{\mathcal{U}}_{ad}(t) = \mathcal{U}_{ad}(t)$ which is already closed and bounded in a finite-dimensional space.

Second, we establish the existence and uniqueness of the state trajectories. For any measurable control $U(t) \in \widetilde{\mathcal{U}}_{ad}(t)$, $f$ is continuous in $t$ and uniformly Lipschitz continuous with respect to the state $\mathbf{r}$. This holds because the spectral norm $\|A(t)+U(t)\|_2$ is uniformly bounded over the compact domain $\widetilde{\mathcal{U}}_{ad}(t)$. By the Carath\'{e}odory existence theorem, there exists a unique, absolutely continuous trajectory $\mathbf{r}(t)$ on $[0,T]$. 

Third, we analyze the extended velocity set $N(\mathbf{r}, t)$, which characterizes the reachable directions penalized by the running cost:
\begin{equation*}
    N(\mathbf{r}, t) = \left\{ (f(t, \mathbf{r}, U), \gamma) \;\middle|\; U \in \widetilde{\mathcal{U}}_{ad}(t), \gamma \ge \frac{\alpha}{2}\|U\|_F^2 \right\}.
\end{equation*}
We recall that the state dynamics $f(t, \mathbf{r}, U)$ are affine with respect to the control matrix $U$ and the running cost $L(U) = \frac{\alpha}{2}\|U\|_F^2$ is strictly convex with respect to $U$. Because the mapping $U \mapsto f$ is affine, the mapping $U \mapsto L$ is convex, and the domain $\widetilde{\mathcal{U}}_{ad}(t)$ is a convex set, it follows that the extended velocity set $N(\mathbf{r}, t)$ is convex.

Since the effective admissible control set is compact, the dynamics are well-posed and bounded, and the extended velocity set is convex, all requirements of the Filippov-Cesari theorem are satisfied. Therefore, an absolute minimum for the functional $J(\mathbf{r},U)$ within the class of admissible pairs is guaranteed to exist.
\end{proof}

\begin{remark}
    For the nonlinear formulations~\eqref{eq:matrix-formulation} and~\eqref{eq:dynamic-receive}, the dynamics involve the nonlinear term $-\log(I - a(A(t)+U(t)))$. The non-affine nature of the logarithm implies the mapping $U \mapsto f(t, C, U)$ is nonlinear. Consequently, the extended velocity set $N(C, t)$ may lose convexity with respect to $U \in \mathcal{U}_{ad}(t)$.  Consequently, standard existence results like the Filippov-Cesari theorem cannot be directly applied. Establishing the existence of an optimal control for this log-based formulation presents significant mathematical challenges and falls beyond the scope of this paper. Nonetheless, Proposition~\ref{pro:nonemptyness_logarithm} is sufficient to ensure that at least the chosen set of admissible controls is nonempty for a reasonable choice of modification pattern.
\end{remark}

\section{Formulation of the solution via the PMP}\label{sec:PMP}

The optimization problems described in \eqref{eq:dynamic-receive}, \eqref{eq:dynamic-receive-simplified} and~\eqref{eq:dynamic-receive-simplified-undirected} are instances of a general optimal control problem with constraints imposed on the control variable. To outline our solution approach, we first recall the general formulation of an optimal control problem.

Consider a system characterized by a \textit{state} variable \( \mathbf{x}(t) \in \mathbb{R}^n \), which evolves according to the following system of differential equations:
    \begin{equation*}
        \begin{cases}
            \dot{\mathbf{x}}(t) = \mathbf{f}(\mathbf{x}(t), \mathbf{u}(t), t), \\
            \mathbf{x}(0) = \mathbf{x}_0,
        \end{cases}
    \end{equation*}
where \( \mathbf{u}(t) \in \mathcal{U}_{ad} \subset \mathbb{R}^m \) is the \textit{control} variable which is taken into a subset $\mathcal{U}_{ad}$ of $\mathbb{R}^m$ of admissible controls, and \( \mathbf{f} : \mathbb{R}^n \times \mathbb{R}^m \times \mathbb{R} \to \mathbb{R}^n \) is a continuous function defining the system dynamics. Our objective is to determine an optimal control function \( \mathbf{u}^*(t) \) that minimizes a given cost functional \( \mathrm{J} \), which typically takes the form:
    \begin{equation*}
        \mathrm{J}(\mathbf{x},\mathbf{u}) = \int_{0}^{T} L(\mathbf{x}(t), \mathbf{u}(t), t) \, \mathrm{d}t + \Phi(\mathbf{x}(T)),
    \end{equation*}
where \( L : \mathbb{R}^n \times \mathbb{R}^m \times \mathbb{R} \to \mathbb{R} \) represents the running cost, and \( \Phi : \mathbb{R}^n \to \mathbb{R} \) is the terminal cost. The optimal control function \( \mathbf{u}^*(t) \) must ensure that the state trajectory \( \mathbf{x}(t) \) satisfies the given dynamic constraints and initial conditions while minimizing the cost functional.

For a.e. \(t\in[0,T]\), we work with effective admissible control sets that are nonempty, closed, convex, and compact in all the cases under consideration. In addition, the corresponding state dynamics are measurable with respect to \(t\) and continuously differentiable with respect to the variables \((\mathbf r,U)\) on the admissible set. Hence, the PMP applies and
provides first-order necessary optimality conditions; see, e.g.,
\cite[Theorem 22.26]{clarke2013functional}. It provides necessary conditions that an optimal control \( \mathbf{u}^*(t) \) and corresponding state \( \mathbf{x}^*(t) \) must satisfy. Define the Hamiltonian function \( \mathrm{H} : \mathbb{R}^n \times \mathbb{R}^m \times \mathbb{R}^n \times \mathbb{R} \to \mathbb{R} \) as:
\begin{equation}\label{eq:generic_hamiltonian}
    \mathrm{H}(\mathbf{x}, \mathbf{u}, \boldsymbol{\lambda}, t) = \boldsymbol{\lambda}(t)^\top \mathbf{f}(\mathbf{x}, \mathbf{u}, t) + L(\mathbf{x}, \mathbf{u}, t),
\end{equation}
where \( \boldsymbol{\lambda}(t) \in \mathbb{R}^n \) is the \textit{costate} (or adjoint) variable.

The PMP asserts that for an optimal control \( \mathbf{u}^*(t) \) and state trajectory \( \mathbf{x}^*(t) \), there exists a nontrivial costate vector \( \boldsymbol{\lambda}(t) \) such that:
\begin{enumerate}
    \item The state and costate dynamics satisfy:
    \begin{eqnarray}
        & \dot{\mathbf{x}}^*(t)  = & \frac{\partial \mathrm{H}}{\partial \boldsymbol{\lambda}}(\mathbf{x}^*(t), \mathbf{u}^*(t), \boldsymbol{\lambda}(t), t), \label{eq:condition_on_state}\\
        & \dot{\boldsymbol{\lambda}}(t) = & -\frac{\partial \mathrm{H}}{\partial \mathbf{x}}(\mathbf{x}^*(t), \mathbf{u}^*(t), \boldsymbol{\lambda}(t), t). \label{eq:condition_on_costate}
    \end{eqnarray}
    \item The optimal control \( \mathbf{u}^*(t) \) minimizes the Hamiltonian:
    \begin{equation}\label{eq:hamiltonian_minimization}
        \mathbf{u}^*(t) = \arg \min_{\mathbf{u} \in \mathcal{U}_{ad}} \mathrm{H}(\mathbf{x}^*(t), \mathbf{u}, \boldsymbol{\lambda}(t), t),
    \end{equation}
    for the set of admissible controls $\mathcal{U}_{ad} \subset \mathbb{R}^m$, for which the unconstrained gradient is given by
    \begin{equation}\label{eq:hamiltonian_gradient}
        \mathbf{g}(t) = \frac{\partial \mathrm{H}}{\partial \mathbf{u}}.
    \end{equation}
    \item The boundary conditions include the initial condition \( \mathbf{x}(0) = \mathbf{x}_0 \), and if the terminal state \( \mathbf{x}(T) \) is free, the transversality condition for the costate:
    \begin{equation}\label{eq:terminal_condition}
        \boldsymbol{\lambda}(T) = \frac{\partial \Phi}{\partial \mathbf{x}}(\mathbf{x}(T)).
    \end{equation}
\end{enumerate}
In the following, we reformulate the problems \eqref{eq:dynamic-receive}, \eqref{eq:dynamic-receive-simplified} and~\eqref{eq:dynamic-receive-simplified-undirected} in a manner that allows us to leverage the PMP to derive a solution procedure. Specifically, we express these conditions as a constrained optimization problem:
\begin{equation*}
    \min_{\mathbf{u} \in \mathcal{U}_{ad}} \mathrm{J}(\mathbf{u}) = \min_{\mathbf{u} \in \mathcal{U}_{ad}} \mathrm{J}(\mathbf{x}(\mathbf{u}), \mathbf{u}) = \min_{\mathbf{x}, \mathbf{u}} \left\{ \mathrm{J}(\mathbf{x}, \mathbf{u}) \mid e(\mathbf{x}, \mathbf{u}) = 0 \right\},
\end{equation*}
where the constraint function $e(\mathbf{x}, \mathbf{u})$ is given by
\begin{equation*}
    e(\mathbf{x}, \mathbf{u}) = \begin{bmatrix}
        \dot{\mathbf{x}}(t) - \mathbf{f}(\mathbf{x}, \mathbf{u}, t) \\
        \mathbf{x}(0) - \mathbf{x}_0
    \end{bmatrix}.
\end{equation*}
The unconstrained gradient of the objective function is denoted by $\mathbf{g}(\mathbf{u})$. Suppose we have a projection operator $\Pi_{\mathcal{U}_{ad}}$ that projects a vector from $\mathbb{R}^m$ onto the feasible set $\mathcal{U}_{ad}$, i.e.,
\begin{equation*}
    \Pi_{\mathcal{U}_{ad}}(\mathbf{y}) = \arg\min_{\mathbf{x} \in \mathcal{U}_{ad}} \| \mathbf{x} - \mathbf{y}\|_2.
\end{equation*}
We can then construct a projected gradient method, 
incorporating Nesterov's acceleration 
to optimize this problem (Algorithm~\ref{alg:nesterov_pg}). 
\begin{algorithm}[htb]
\KwIn{Tolerance $\varepsilon_g > 0$, fixed step size $\eta > 0$, initial guess $\mathbf{u}^{(0)}$}
$\mathbf{u}^{(-1)} \gets \mathbf{u}^{(0)}$\;
$k \gets 0$\;
\While{not converged}{
    $\beta_k \gets \dfrac{k-1}{k+2}$\;
    $\mathbf{y}^{(k)} \gets \mathbf{u}^{(k)} + \beta_k\!\left(\mathbf{u}^{(k)} - \mathbf{u}^{(k-1)}\right)$\;
    \ShowLnLabel{line:solve_state} Solve~\eqref{eq:condition_on_state} at $\mathbf{y}^{(k)}$ to obtain $\mathbf{x}^{(k)}$\;
    \ShowLnLabel{line:solve_costate} Solve~\eqref{eq:condition_on_costate} at $(\mathbf{x}^{(k)},\mathbf{y}^{(k)})$ with terminal condition~\eqref{eq:terminal_condition} to obtain $\boldsymbol{\lambda}^{(k)}$\;
    Compute unconstrained gradient $\hat{\mathbf{g}}^{(k)}$ employing~\eqref{eq:hamiltonian_gradient}\;
    $\mathbf{u}^{(k+1)} \gets \Pi_{\mathcal{U}_{ad}}\!\left[ \mathbf{y}^{(k)} - \eta\,\hat{\mathbf{g}}^{(k)} \right]$\;
    \If{$\mathrm{J}(\mathbf{u}^{(k+1)}) > \mathrm{J}(\mathbf{y}^{(k)})$ \upshape(monotone restart)}{
        $\mathbf{y}^{(k)} \gets \mathbf{u}^{(k)}$, recompute $\hat{\mathbf{g}}^{(k)}$ at $\mathbf{u}^{(k)}$, and reset $\mathbf{u}^{(k+1)} \gets \Pi_{\mathcal{U}_{ad}}\!\left[\mathbf{u}^{(k)} - \eta\,\hat{\mathbf{g}}^{(k)}\right]$\;
    }
    \ShowLnLabel{line:project}$\mathbf{g}^{(k)} \gets \dfrac{1}{\eta}\!\left(\mathbf{y}^{(k)} - \mathbf{u}^{(k+1)}\right)$\;
    \lIf{$\dfrac{\|\mathbf{g}^{(k)}\|_2}{\max(1,\|\mathbf{y}^{(k)}\|_2)} \leq \varepsilon_g$}{\textbf{break}}
    $k \gets k + 1$\;
}
\caption{Nesterov accelerated projected gradient with monotone restart for the solution of an optimal control problem via the PMP.\label{alg:nesterov_pg}}
\end{algorithm}
The key idea is to maintain, alongside the iterates $\mathbf{u}^{(k)}$, an \emph{extrapolated} auxiliary sequence $\mathbf{y}^{(k)}$ that incorporates a momentum term from the previous step:
\begin{equation*}%
    \mathbf{y}^{(k)} = \mathbf{u}^{(k)} + \beta_k \left( \mathbf{u}^{(k)} - \mathbf{u}^{(k-1)} \right), \qquad \beta_k = \frac{k-1}{k+2}.
\end{equation*}
The gradient of the Hamiltonian and the state/costate equations are then evaluated at $\mathbf{y}^{(k)}$ rather than at $\mathbf{u}^{(k)}$, and the next iterate is obtained by a single projected gradient step from $\mathbf{y}^{(k)}$:
\begin{equation}\label{eq:nesterov_update}
    \mathbf{u}^{(k+1)} = \Pi_{\mathcal{U}_{ad}}\!\left[ \mathbf{y}^{(k)} - \eta\,\hat{\mathbf{g}}^{(k)} \right],
\end{equation}
where $\hat{\mathbf{g}}^{(k)}$ is the unconstrained gradient~\eqref{eq:hamiltonian_gradient} evaluated at $(\mathbf{x}(\mathbf{y}^{(k)}), \mathbf{y}^{(k)}, \boldsymbol{\lambda}(\mathbf{y}^{(k)}))$.

Since the optimization problem associated with the controlled dynamic network is not convex in general, the extrapolated point $\mathbf{y}^{(k)}$ may occasionally produce an accelerated step that increases the objective functional~\cite{ODonoghueCandesCMAA2015}. For this reason, we adopt a monotone restart strategy: if the candidate iterate obtained from~\eqref{eq:nesterov_update} satisfies
\[
    J\bigl(\mathbf{u}^{(k+1)}\bigr) > J\bigl(\mathbf{y}^{(k)}\bigr),
\]
then the momentum contribution is discarded for that iteration by resetting $\mathbf{y}^{(k)} \gets \mathbf{u}^{(k)}$, recomputing the corresponding gradient at $\mathbf{u}^{(k)}$, and replacing the accelerated trial with the standard projected-gradient step
\[
    \mathbf{u}^{(k+1)} = \Pi_{\mathcal{U}_{ad}}\!\left[ \mathbf{u}^{(k)} - \eta\,\hat{\mathbf{g}}^{(k)} \right].
\]
This safeguard preserves the simplicity of the fixed-step iteration while preventing the momentum term from generating non-descent updates.

The convergence of Algorithm~\ref{alg:nesterov_pg} is monitored via the \emph{projected-gradient mapping} norm, which is the standard optimality measure for projected-gradient methods~\cite[\S 17.1]{MR3364552}. At iteration $k$, given step size $\eta> 0$, the projected-gradient mapping is defined as
\[
    \mathbf{g}^{(k)} = \dfrac{1}{\eta}\!\left(\mathbf{y}^{(k)} - \mathbf{u}^{(k+1)}\right),
\]
defined in line~\ref{line:project} of the algorithm. Note that $\mathbf{g}^{(k)} = \mathbf{0}$ is a necessary optimality condition for the projected problem, and $\| \mathbf{g}^{(k)} \|_2$ measures how far the current iterate is from satisfying this condition. The algorithm is declared converged when
\[
    \frac{\|\mathbf{g}^{(k)}\|_2}{\max(1,\|\mathbf{y}^{(k)}\|_2)} \leq \varepsilon_g,
\]
where $\varepsilon_g > 0$ is a user-specified tolerance. As a secondary safeguard against stagnation, a simultaneous check on both the relative change in the iterates,
\[
    \frac{\|\mathbf{u}^{(k+1)} - \mathbf{u}^{(k)}\|_2}{\max(1,\|\mathbf{u}^{(k)}\|_2)} \leq \varepsilon_x,
\]
and the relative change in the objective value is also employed. 

In the following subsection, we derive the expression for the state, costate, and gradient equations for the problems we formulated in Section~\ref{sec:the-steering-problem}.

\subsection{The dynamic receive case}%

We start by applying the general theory to \emph{dynamic receive} case from~\eqref{eq:dynamic-receive}. For this problem the Hamiltonian~\eqref{eq:generic_hamiltonian} can be written as
\begin{equation}\label{eq:our-hamiltonian}
\begin{split}
  \mathrm{H}(\mathbf{r}, U, \boldsymbol{\lambda}, t) =  \frac{\alpha}{2} \| U(t) \|^2_F + \boldsymbol{\lambda}(t)^\top   \left( -b \left( \mathbf{r}(t) - \mathbf{1} \right) %
  - \log \left( I - a (A(t) + U(t))^\top \right) \mathbf{r}(t) \right).
\end{split}
\end{equation}
The next step is to derive the optimality conditions for the state~\eqref{eq:condition_on_state}, costate~\eqref{eq:condition_on_costate} and for the gradient of the minimization problem in~\eqref{eq:hamiltonian_minimization}. To obtain them we need to prove some preliminary results regarding the Fréchet derivative of the matrix logarithm function.

\begin{lemma}\label{lem:frechetting}
The partial derivative of the Hamiltonian $\mathrm{H}$ in~\eqref{eq:our-hamiltonian} with respect to the constraint $U$ can be expressed~as
\[
\left[ \frac{\partial \mathrm{H}}{\partial U} \right]_{i,j} =  \alpha \mathbf{e}_i^\top U \mathbf{e}_j + a \boldsymbol{\lambda}(t)^\top  \mathrm{L}_{\log}\left( I - a(A+U)^\top,E_{j,i} \right) \mathbf{r}(t) \qquad \forall\,i,j=1,\ldots,n.
\]
\end{lemma}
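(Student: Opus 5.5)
The plan is to differentiate the Hamiltonian~\eqref{eq:our-hamiltonian} entrywise, treating the two terms separately. The first term is the regularization. Since $\frac{\alpha}{2}\|U\|_F^2 = \frac{\alpha}{2}\sum_{k,\ell} u_{k,\ell}^2$, its partial derivative with respect to $u_{i,j}$ is simply $\alpha u_{i,j} = \alpha\,\mathbf{e}_i^\top U \mathbf{e}_j$.

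The term $-b\,\boldsymbol{\lambda}^\top(\mathbf{r}-\mathbf{1})$ does not depend on $U$, so it drops out. The only nontrivial contribution is therefore
\[
\phi(U) := -\boldsymbol{\lambda}(t)^\top \log\left(I - a(A+U)^\top\right)\mathbf{r}(t).
\]

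To handle $\phi$, perturb $U$ in the direction $E_{i,j} = \mathbf{e}_i\mathbf{e}_j^\top$, i.e.\ replace $U$ by $U + h E_{i,j}$. The argument of the logarithm becomes
\[
M - ha E_{i,j}^\top = M - ha E_{j,i}, \qquad M := I - a(A+U)^\top .
\]
By Remark~\ref{rem:log-uniqueness}, $M$ is a nonsingular M-matrix with spectrum in the open right half plane, since $A+U$ satisfies the admissibility constraint via Proposition~\ref{prop:relaxed_constraint}. The principal logarithm is therefore analytic in a neighborhood of the spectrum of $M$, so it is Fréchet differentiable at $M$ in the sense of Definition~\ref{def:frechet}. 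This gives
\[
\log(M - haE_{j,i}) = \log(M) - ha\,\mathrm{L}_{\log}(M,E_{j,i}) + o(h),
\]
where the linearity of $\mathrm{L}_{\log}(M,\cdot)$ in its second argument has been used to pull out the scalar $-ha$.

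Substituting into $\phi$, dividing by $h$ and letting $h\to 0$, the two minus signs cancel:
\[
\frac{\partial \phi}{\partial u_{i,j}} = a\,\boldsymbol{\lambda}(t)^\top \mathrm{L}_{\log}(M,E_{j,i})\,\mathbf{r}(t).
\]
Adding the regularization contribution yields the claimed formula.

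The main point needing care is the justification of the differentiability step. One has to ensure that $M$ stays in the region where the principal logarithm is well defined and smooth. This holds because the strict constraint $\rho(A+U)<1/a$ is an open condition, so it persists under small perturbations $hE_{i,j}$. It is also worth double-checking the index swap $E_{i,j}^\top = E_{j,i}$ produced by the transpose inside the logarithm. Consistency with the real-valued gradient convention follows because $M$ is real, so $\mathrm{L}_{\log}(M,E_{j,i})$ is real.
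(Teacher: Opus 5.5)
Your proposal is correct and follows the paper's proof essentially step by step. It uses the same split into the Tikhonov term and the logarithmic bilinear form, the same perturbation in the direction $E_{i,j}$ (with the transpose turning it into $E_{j,i}$), and the same appeal to the definition and linearity of $\mathrm{L}_{\log}(M,\cdot)$. The only differences are minor: you differentiate $\frac{\alpha}{2}\sum_{k,\ell}u_{k,\ell}^2$ directly rather than through the trace, and you state explicitly the open spectral condition that makes the principal logarithm Fr\'echet differentiable at $M$, which the paper leaves implicit.
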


\begin{proof}
We call 
\[
f(U) = \frac{\alpha}{2}\| U \|_F^2 - \boldsymbol{\lambda}(t)^\top  \log\left( I - a(A + U)^\top\right) \mathbf{r}(t) = f_1(U) + f_2(U),
\]
with $f_1$ collecting the Frobenius norm term, and $f_2$ collecting the bilinear form involving the matrix-logarithm. Then $[\nicefrac{\partial \mathrm{H}}{\partial U}]_{i,j}$ can be expressed in terms of the rank-1 matrix $E_{i,j} = \mathbf{e}_i \mathbf{e}_j^\top$ as the sum of the derivatives of the two functions $f_1$ and $f_2$. Indeed,
\begin{equation*}
\begin{split}
    \lim_{t \rightarrow 0} & \frac{1}{t} \left( f_1(U + t E_{i,j}) - f_1(U) \right) = 
    \lim_{t \to 0}  \frac{\alpha}{2 t}\left( \operatorname{tr}((U+t E_{i,j})^\top (U+t E_{i,j})) - \operatorname{tr}(U^\top U) \right) = \\
    & \lim_{t \to 0}  \frac{\alpha}{2t} \left( \operatorname{tr}\left( t (E_{i,j}^\top U + U^\top E_{i,j} + t E_{i,j}^\top E_{j,i} \right) \right) = 
     \frac{\alpha}{2} \left( \operatorname{tr}\left( \mathbf{e}_j \mathbf{e}_i^\top U + U^\top \mathbf{e}_i \mathbf{e}_j^\top \right) \right),
\end{split}
\end{equation*}
exploiting again the linearity and the cyclicity of the trace we have
\[
\left[\frac{\partial f_1}{\partial U}\right]_{i,j} = \alpha \mathbf{e}_i^\top U \mathbf{e}_j, \qquad \forall\,i,j=1,\ldots,n.
\]
Let $M(U) := I-a(A+U)^\top$. We apply the same direct computation also in the case of $f_2$, obtaining
\begin{equation*}
\begin{split}
    \lim_{t \to 0} &  \frac{1}{t} \left( f_2(U + t E_{i,j}) - f_2(U) \right) = 
    \lim_{t \to 0}  - \frac{1}{t} \boldsymbol{\lambda}(t)^\top  \left( \log\left( M(U + t E_{i,j})\right) - \log\left( M(U)\right)  \right) \mathbf{r}(t),
\end{split}
\end{equation*}
now using the definition of Fréchet derivative (Definition~\ref{def:frechet}) we find
\begin{equation*}
\begin{split}
    \lim_{t \to 0} & - \frac{1}{t} \boldsymbol{\lambda}(t)^\top  \left( \log\left( M(U)\right) + \mathrm{L}_{\log}(M(U),-t a E_{j,i} ) + o(|t|\|E_{i,j}\|) %
    - \log\left( M(U)\right)\mathbf{r}(t) \right) = \\
    \lim_{t \to 0} & - \frac{1}{t} \boldsymbol{\lambda}(t)^\top  \left( \mathrm{L}_{\log}(M(U),-t a E_{j,i}) + o(|t|\|E_{i,j}\|) \right) \mathbf{r}(t),    
\end{split}
\end{equation*}
we now use the fact that the Fréchet derivative is linear in the second argument, to conclude that
\[
\left[\frac{\partial f_2}{\partial U}\right]_{i,j} = a \boldsymbol{\lambda}(t)^\top  \mathrm{L}_{\log}\left( I - a(A+U)^\top,E_{j,i} \right) \mathbf{r}(t) \qquad \forall\,i,j=1,\ldots,n.
\]
From which we recover the thesis by adding the two expressions for $f_1$ and~$f_2$.
\end{proof}

We can now express the entire derivative in matrix form using an expression for computing a bilinear form of the Fréchet derivative of a matrix function in a rank-1 direction~\cite{FrechetTrick}.
\begin{theorem}[{\cite[Theorem~2.3]{FrechetTrick}}]\label{thm:frechettrick}
Let $A \in \mathbb{R}^{n \times n}$, $\mathbf{u},\mathbf{v} \in \mathbb{R}^n$, let $f$ be Fréchet differentiable at $A$, and denote $E_{i,j} = \mathbf{e}_i \mathbf{e}_j^\top$. Then
\[
\mathbf{u}^\top \mathrm{L}_f(A,E_{i,j}) \mathbf{v} = \left[ \mathrm{L}_f(A^\top, \mathbf{u}\mathbf{v}^\top )\right]_{i,j}.
\]
\end{theorem}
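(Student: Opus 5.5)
We need to prove $\mathbf{u}^\top \mathrm{L}_f(A,E_{i,j}) \mathbf{v} = [\mathrm{L}_f(A^\top,\mathbf{u}\mathbf{v}^\top)]_{i,j}$. The plan has two ingredients: express the bilinear form as a trace, and then use that the Fréchet derivative of a (primary) matrix function is self-adjoint with respect to the trace inner product, up to transposing the base point.

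Write $\mathbf{u}^\top X \mathbf{v} = \operatorname{tr}(\mathbf{v}\mathbf{u}^\top X) = \langle \mathbf{u}\mathbf{v}^\top, X\rangle$, where $\langle Y,X\rangle = \operatorname{tr}(Y^\top X)$ is the Frobenius inner product. Likewise, $[Z]_{i,j} = \langle E_{i,j}, Z\rangle$. The claim then becomes the adjoint identity
\[
\langle \mathbf{u}\mathbf{v}^\top, \mathrm{L}_f(A,E)\rangle = \langle \mathrm{L}_f(A^\top,\mathbf{u}\mathbf{v}^\top), E\rangle, \qquad E = E_{i,j}.
\]
So it suffices to prove $\langle G, \mathrm{L}_f(A,E)\rangle = \langle \mathrm{L}_f(A^\top,G), E\rangle$ for all $G,E$, i.e., that the adjoint of $\mathrm{L}_f(A,\cdot)$ is $\mathrm{L}_f(A^\top,\cdot)$.

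To prove the adjoint identity, first treat polynomials $f(z)=z^k$. Here $\mathrm{L}_f(A,E) = \sum_{m=0}^{k-1} A^m E A^{k-1-m}$. Cyclicity of the trace gives
\[
\operatorname{tr}\big(G^\top A^m E A^{k-1-m}\big) = \operatorname{tr}\big((A^{k-1-m}G^\top A^m) E\big) = \operatorname{tr}\big(((A^\top)^m G (A^\top)^{k-1-m})^\top E\big).
\]
Summing over $m$, this is exactly $\langle \mathrm{L}_f(A^\top,G),E\rangle$. By linearity the identity then holds for every polynomial.

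For general $f$, I would use the fact (Higham, Theorem 3.8 together with the block formula~\eqref{eq:explicit-frechet}) that $\mathrm{L}_f(A,E) = \mathrm{L}_p(A,E)$, where $p$ is the Hermite interpolating polynomial of $f$ on the spectrum of $\operatorname{diag}(A,A)$, i.e.\ of the block matrix in~\eqref{eq:explicit-frechet}. Since $A^\top$ has the same spectrum and Jordan structure as $A$, the same $p$ also interpolates $f$ on the spectrum of the block matrix built from $A^\top$. Hence $\mathrm{L}_f(A^\top,G) = \mathrm{L}_p(A^\top,G)$, and the polynomial case transfers.

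\textbf{Main obstacle.} The key difficulty is this reduction to polynomials under the stated hypothesis that $f$ is merely Fréchet differentiable at $A$. I would instead assume the smoothness needed for~\eqref{eq:explicit-frechet}, namely $f$ being $(2n-1)$-times differentiable near the spectrum, which certainly holds for $\log$ in our application. Under that assumption the Hermite-interpolation argument is clean.

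An alternative route avoids Jordan forms. Use a Cauchy integral representation $\mathrm{L}_f(A,E) = \frac{1}{2\pi i}\oint f(z)(zI-A)^{-1}E(zI-A)^{-1}\,dz$. The same trace cyclicity applied to the integrand, combined with $((zI-A)^{-1})^\top = (zI-A^\top)^{-1}$, gives the identity directly for analytic $f$.
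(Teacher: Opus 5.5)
The paper does not prove this statement: it quotes it from \cite{FrechetTrick} and uses it once, with $f=\log$, to turn the entrywise gradient of Lemma~\ref{lem:frechetting} into the matrix formula of Proposition~\ref{pro:sufficient_condition}. Your proposal therefore replaces a citation with a self-contained proof, and its core is sound. With the bilinear pairing $\langle Y,X\rangle=\operatorname{tr}(Y^\top X)$, the claim reduces to the adjoint identity $\langle G,\mathrm{L}_f(A,E)\rangle=\langle \mathrm{L}_f(A^\top,G),E\rangle$. This is stronger than what is quoted, since it holds for arbitrary $G,E$ rather than only $\mathbf{u}\mathbf{v}^\top$ and $E_{i,j}$. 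Your trace-cyclicity computation for monomials is correct. For what the paper needs, the Cauchy-integral variant is the shortest complete argument. The function $\log$ is analytic near the spectrum of $I-a(A+U)^\top$, which lies in $\{z:|z-1|<1\}$ whenever $a\rho(A+U)<1$. Because your pairing has no complex conjugation, the complex scalar $f(z)$ and the resolvent $(zI-A)^{-1}$, whose transpose is $(zI-A^\top)^{-1}$, pass through the trace exactly as in the polynomial case.

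Two points need tightening.

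First, in the interpolation route you must say which conditions $p$ satisfies, because the Jordan structure of $\begin{bmatrix} A & E\\ O & A\end{bmatrix}$ depends on the direction. For $A=\begin{bmatrix}\lambda & 1\\ 0&\lambda\end{bmatrix}$, the block matrix with $E=E_{1,1}$ has index $3$ at $\lambda$. The one built from $A^\top$ and $G=\mathbf{u}\mathbf{v}^\top$ with $\mathbf{u}=\mathbf{v}=(1,1)^\top$ has index $4$. Suppose ``interpolates $f$ on the spectrum'' is read through the minimal polynomial, as in the Jordan-form definition the paper uses. Then a $p$ built for $(A,E_{i,j})$ need not satisfy $\mathrm{L}_f(A^\top,G)=\mathrm{L}_p(A^\top,G)$. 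A $p$ built for $\operatorname{diag}(A,A)$ fails even for $A$ itself: $A=\lambda I$ gives a constant $p$ and $\mathrm{L}_p=0$. So ``$A^\top$ has the same Jordan structure as $A$'' is not the relevant reason. The fix is to let $p$ interpolate $f$ at the roots of $\chi_A^2$ counted with multiplicity, where $\chi_A$ is the characteristic polynomial of $A$. Then $\chi_A^2$ is the characteristic polynomial of every $\begin{bmatrix} A & E\\ O & A\end{bmatrix}$ and of every $\begin{bmatrix} A^\top & G\\ O & A^\top\end{bmatrix}$. By Cayley--Hamilton a single $p$ reproduces $f$ on all of them, and $2n-1$ derivatives of $f$ suffice.

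Second, as you acknowledge, both routes prove the theorem only under $C^{2n-1}$ regularity, or analyticity, near the spectrum. They do not cover bare Fréchet differentiability at $A$ as quoted. This is a real narrowing of the statement, though harmless for every use the paper makes of it. To obtain the version as stated, you would need a different argument or would have to rely on \cite{FrechetTrick}.
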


With these preliminary results, we can then formulate the set of optimality conditions for the problem~\eqref{eq:dynamic-receive}. 
\begin{proposition}\label{pro:sufficient_condition}
The optimality conditions for the Hamiltonian~\eqref{eq:our-hamiltonian} with the linearized constraint from Proposition~\ref{prop:relaxed_constraint} are given by
\begin{equation*}
\begin{cases}
\begin{cases}
    \displaystyle \dot{\mathbf{r}}(t) = \frac{\partial \mathrm{H}}{\partial  \boldsymbol{\lambda}}  =   -b \left( \mathbf{r}(t) - \mathbf{1} \right) - \log \left( I - a (A(t) + U(t))^\top \right) \mathbf{r}(t), \\
    \mathbf{r}(0) = \mathbf{1}, 
\end{cases} \\
\begin{cases}\displaystyle  \dot{\boldsymbol{\lambda}}(t) =-\frac{\partial \mathrm{H}}{\partial \mathbf{r}}  =  b \boldsymbol{\lambda}(t) + \log \left( I - a (A(t) + U(t)) \right) \boldsymbol{\lambda}(t), \\
\boldsymbol{\lambda}(T) = \mathbf{r}(T) - \mathbf{r}^*.
\end{cases} \\[1.4em]
\displaystyle  \frac{\partial \mathrm{H}  }{\partial U} 
 =\alpha  U(t) + a\left(\mathrm{L}_{\log}\left(I - a(A(t)+U(t)),\boldsymbol{\lambda}(t) \mathbf{r}(t)^\top\right)\right)^\top, \\
U(t) \in \mathcal{U}_{\text{ad}}(t),
\end{cases}
\end{equation*}
where the Fréchet derivative $\mathrm{L}_{\log}(\cdot,\cdot)$ can be expressed in terms of the matrix logarithm following the formula \eqref{eq:explicit-frechet}, $\boldsymbol{\mu}(t) = (I - a A(t))^{-1} \mathbf{1}$, and $\mathcal{U}_{\text{ad}}(t)$ is the one given in~\eqref{eq:admset_original}.
\end{proposition}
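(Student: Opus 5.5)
The plan is to apply the PMP conditions \eqref{eq:condition_on_state}--\eqref{eq:terminal_condition} to the Hamiltonian \eqref{eq:our-hamiltonian}, computing each partial derivative in turn. The admissible set is the one of Proposition~\ref{pro:nonemptyness_logarithm}. By Proposition~\ref{prop:relaxed_constraint} and the safety margin $\epsilon$, every $U(t)\in\mathcal{U}_{ad}(t)$ gives $\rho(A(t)+U(t))<1/a$. Then $I-a(A(t)+U(t))$ is a nonsingular M-matrix with spectrum in the open right half plane, so by Remark~\ref{rem:log-uniqueness} the principal logarithm is well defined and analytic in a neighbourhood of it. This justifies all the derivatives below and the use of \eqref{eq:explicit-frechet}.

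\emph{State and costate.} The state equation is immediate: $\mathrm{H}$ is affine in $\boldsymbol{\lambda}$, and $\partial\mathrm{H}/\partial\boldsymbol{\lambda}$ recovers the dynamics of \eqref{eq:dynamic-receive}. Here we use $\log(M^\top)=\log(M)^\top$, which holds for primary matrix functions. For the costate, $\mathrm{H}$ is also affine in $\mathbf{r}$, and
\[
\frac{\partial}{\partial \mathbf{r}}\Bigl(\boldsymbol{\lambda}^\top\bigl(-b\mathbf{r}-\log(M^\top)\mathbf{r}\bigr)\Bigr)=-b\boldsymbol{\lambda}-\log(M^\top)^\top\boldsymbol{\lambda}=-b\boldsymbol{\lambda}-\log(M)\boldsymbol{\lambda},
\]
with $M=I-a(A+U)$. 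Changing sign gives the stated adjoint equation. The running cost is independent of $\mathbf{r}$, and $\Phi(\mathbf{r})=\tfrac12\|\mathbf{r}-\mathbf{r}^*\|_2^2$, so \eqref{eq:terminal_condition} yields $\boldsymbol{\lambda}(T)=\mathbf{r}(T)-\mathbf{r}^*$.

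\emph{Gradient in $U$.} Start from Lemma~\ref{lem:frechetting}:
\[
\bigl[\partial\mathrm{H}/\partial U\bigr]_{i,j}=\alpha U_{i,j}+a\,\boldsymbol{\lambda}^\top\mathrm{L}_{\log}(M^\top,E_{j,i})\mathbf{r}.
\]
Apply Theorem~\ref{thm:frechettrick} with $A\leftarrow M^\top$, $\mathbf{u}=\boldsymbol{\lambda}$, $\mathbf{v}=\mathbf{r}$. This gives
\[
\boldsymbol{\lambda}^\top\mathrm{L}_{\log}(M^\top,E_{j,i})\mathbf{r}=\bigl[\mathrm{L}_{\log}(M,\boldsymbol{\lambda}\mathbf{r}^\top)\bigr]_{j,i}.
\]
Collecting entries, the second term is the $(i,j)$ entry of $a\,\mathrm{L}_{\log}(M,\boldsymbol{\lambda}\mathbf{r}^\top)^\top$, which is the claimed matrix formula.

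Finally, the minimization condition \eqref{eq:hamiltonian_minimization} over the closed convex set $\mathcal{U}_{ad}(t)$ is recorded as $U(t)\in\mathcal{U}_{ad}(t)$ together with the gradient above. This is the form used by the projected step of Algorithm~\ref{alg:nesterov_pg}.

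The main obstacle is bookkeeping rather than depth. One must track the transposes correctly when passing between $\log(M^\top)$ and $\log(M)$ and between $E_{i,j}$ and $E_{j,i}$. One must also check that the Fréchet differentiability hypothesis of Theorem~\ref{thm:frechettrick} holds at $M^\top$; this follows from analyticity of $\log$ on a neighbourhood of its spectrum, guaranteed by the linearized constraint.
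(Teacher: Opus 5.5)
Your proposal is correct and follows essentially the same route as the paper. The state and costate equations come from the affine dependence of $\mathrm{H}$ on $\boldsymbol{\lambda}$ and $\mathbf{r}$. The gradient comes from Lemma~\ref{lem:frechetting} combined with Theorem~\ref{thm:frechettrick}, with the transposes tracked exactly as you do. Proposition~\ref{prop:relaxed_constraint} provides well-posedness of the logarithm and of the block formula~\eqref{eq:explicit-frechet}.

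One point where you are more careful than the paper: you explicitly use the $\epsilon$ margin to obtain the strict bound $\rho(A(t)+U(t))<1/a$. The paper cites Proposition~\ref{prop:relaxed_constraint} alone, and that result only yields $\le 1/a$.
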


\begin{proof}
The state equation for the dynamic receive $\mathbf{r}(t)$ is derived by noting that the Hamiltonian $\mathrm{H}$ is linear with respect to $\boldsymbol{\lambda}$. For the \emph{costate} equation, we proceed via direct computation as follows:
\[
\begin{aligned}
   \frac{\partial \mathrm{H}}{\partial \mathbf{r}} 
   &= \frac{\partial}{\partial \mathbf{r}} \left[ \frac{\alpha}{2} \| U(t) \|_F^2 + \boldsymbol{\lambda}(t)^\top  \left( -b \left( \mathbf{r}(t) - \mathbf{1} \right) - \log \left( I - a (A(t) + U(t))^\top \right) \mathbf{r}(t) \right) \right] \\
   &= -b \boldsymbol{\lambda}(t)^\top  - \boldsymbol{\lambda}(t)^\top  \log \left( I - a (A(t) + U(t))^\top \right),
\end{aligned}
\]
from which the result follows by transposing and changing the sign of the previous expression. 

To obtain the expression for $\frac{\partial \mathrm{H}}{\partial U}$, we utilize Lemma~\ref{lem:frechetting} and apply Theorem~\ref{thm:frechettrick} directly. %
Proposition~\ref{prop:relaxed_constraint} directly implies the well-posedness of the state and co-state equations for $\mathbf{r}(t)$ and $\boldsymbol{\lambda}(t)$, respectively. It therefore remains only to verify the Fréchet derivative term appearing in the gradient. 
The term $\mathrm{L}_{\log}\left(I - a(A(t)+U(t)),\boldsymbol{\lambda}(t) \mathbf{r}(t)^\top\right)$ can be equivalently expressed by~\eqref{eq:explicit-frechet} as
\[
\begin{bmatrix}
    I & O
\end{bmatrix}
\log\!\left(
\begin{bmatrix}
    I - a\bigl(A(t) + U(t)\bigr) & \boldsymbol{\lambda}(t)\,\mathbf{r}(t)^\top \\
    O & I - a\bigl(A(t) + U(t)\bigr)
\end{bmatrix}
\right)
\begin{bmatrix}
    O \\[4pt] I
\end{bmatrix}.
\]
The matrix logarithm of this $2 \times 2$ block matrix is well-defined under the condition
$
\rho\left(A(t) + U(t)\right) < \frac{1}{a},
$
which is ensured by Proposition~\ref{prop:relaxed_constraint}.
\end{proof}

In the linear case~\eqref{eq:dynamic-receive-simplified}, obtaining the optimality conditions is then an immediate corollary of Proposition~\ref{pro:sufficient_condition}.

\begin{corollary}%
The Hamiltonian for the optimal control~\eqref{eq:dynamic-receive-simplified} problem is defined~as
\[
\mathrm{H}(\mathbf{r}, U, \boldsymbol{\lambda}, t) = \frac{\alpha}{2}\| U(t) \|_F^2 + \boldsymbol{\lambda}(t)^\top \left( -b (\mathbf{r}(t) - \mathbf{1}) + a (A(t) + U(t))^\top \mathbf{r}(t) \right),
\]
and the optimality conditions are given by
\[
\begin{cases}
     \begin{cases}\displaystyle \dot{\mathbf{r}}(t) = \displaystyle \frac{\partial \mathrm{H}}{\partial \boldsymbol{\lambda}} = - b (\mathbf{r}(t) - \mathbf{1}) + a (A(t) + U(t))^\top \mathbf{r}(t),\\
     \displaystyle \mathbf{r}(0) = \displaystyle \mathbf{1}, \end{cases} \\
\begin{cases}\displaystyle  \dot{\boldsymbol{\lambda}}(t) =-\frac{\partial \mathrm{H}}{\partial \mathbf{r}}  =  b \boldsymbol{\lambda}(t) - a \left(A(t) + U(t)\right) \boldsymbol{\lambda}(t), \\
\boldsymbol{\lambda}(T) = \mathbf{r}(T) - \mathbf{r}^*.
\end{cases} \\[1.4em]
\displaystyle  \frac{\partial \mathrm{H}  }{\partial U} 
 = \alpha U(t) + a \mathbf{r}(t) \boldsymbol{\lambda}(t)^\top, \\
U(t) \in \mathcal{U}_{\text{ad}}(t),
\end{cases}
\]
for $\mathcal{U}_{\text{ad}}(t)$ in either~\eqref{eq:adm_set_directed} or~\eqref{eq:adm_set}.
\end{corollary}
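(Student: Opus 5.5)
The plan is to follow the structure of the proof of Proposition~\ref{pro:sufficient_condition}, but to compute everything by hand. In the linear model the logarithm has been replaced by the identity map on $aA(t)$, so no Fréchet derivative of a nonlinear matrix function is needed. We start from the generic Hamiltonian~\eqref{eq:generic_hamiltonian}. We take the running cost $L = \frac{\alpha}{2}\|U\|_F^2$ and the dynamics $\mathbf{f}(\mathbf{r},U,t) = -b(\mathbf{r}-\mathbf{1}) + a(A(t)+U)^\top\mathbf{r}$ from~\eqref{eq:dynamic-receive-simplified}. This gives the stated expression for $\mathrm{H}$ directly.

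The state equation follows from~\eqref{eq:condition_on_state}, because $\mathrm{H}$ is affine in $\boldsymbol{\lambda}$ with coefficient $\mathbf{f}$.

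For the costate, we differentiate with respect to $\mathbf{r}$:
\[
\frac{\partial \mathrm{H}}{\partial \mathbf{r}} = -b\boldsymbol{\lambda}^\top + a\boldsymbol{\lambda}^\top (A+U)^\top .
\]
Transposing and changing sign, as in~\eqref{eq:condition_on_costate}, gives $\dot{\boldsymbol{\lambda}} = b\boldsymbol{\lambda} - a(A+U)\boldsymbol{\lambda}$. The terminal condition comes from~\eqref{eq:terminal_condition} with $\Phi(\mathbf{r}) = \frac12\|\mathbf{r}-\mathbf{r}^*\|_2^2$, so that $\boldsymbol{\lambda}(T) = \mathbf{r}(T)-\mathbf{r}^*$.

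For the gradient in $U$, we split $\mathrm{H}$ exactly as in Lemma~\ref{lem:frechetting} into $f_1(U) = \frac{\alpha}{2}\|U\|_F^2$ and $f_2(U) = a\boldsymbol{\lambda}^\top U^\top \mathbf{r}$. The computation in that lemma already gives $\partial f_1/\partial U = \alpha U$. For $f_2$ we again perturb along $E_{i,j}$; since $f_2$ is linear in $U$, no limit argument is needed:
\[
f_2(U+sE_{i,j}) - f_2(U) = s\,a\,\boldsymbol{\lambda}^\top E_{j,i}\mathbf{r} = s\,a\,\lambda_j r_i .
\]
Hence $[\partial f_2/\partial U]_{i,j} = a r_i\lambda_j$, that is, $\partial f_2/\partial U = a\mathbf{r}\boldsymbol{\lambda}^\top$. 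As a consistency check, the same result is what one gets by specializing Proposition~\ref{pro:sufficient_condition} with $\log$ replaced by the identity. In that case $\mathrm{L}_{\mathrm{id}}(X,E) = E$, so the term $a(\boldsymbol{\lambda}\mathbf{r}^\top)^\top$ reduces to $a\mathbf{r}\boldsymbol{\lambda}^\top$.

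Finally, the constraint $U(t)\in\mathcal{U}_{\text{ad}}(t)$ is inherited from~\eqref{eq:dynamic-receive-simplified} or~\eqref{eq:dynamic-receive-simplified-undirected}. The PMP applies because the earlier existence discussion shows the effective admissible sets are compact and convex, and because $\mathbf{f}$ is $C^1$ (indeed bilinear) in $(\mathbf{r},U)$.

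The only real obstacle is the undirected case~\eqref{eq:adm_set}. There the control is constrained to be symmetric, so the gradient restricted to the symmetric subspace should strictly be the symmetric part, $\alpha U + \frac{a}{2}(\mathbf{r}\boldsymbol{\lambda}^\top + \boldsymbol{\lambda}\mathbf{r}^\top)$. I would handle this by noting that the stated expression is the unconstrained gradient~\eqref{eq:hamiltonian_gradient}. The symmetry constraint is enforced through $\Pi_{\mathcal{U}_{ad}}$, and the orthogonal projection onto symmetric matrices automatically symmetrizes the gradient step. Consequently, the formula as stated is the correct input to Algorithm~\ref{alg:nesterov_pg} in both cases.
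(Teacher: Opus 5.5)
Your proposal is correct and follows essentially the paper's route. The paper obtains the result as an immediate corollary of Proposition~\ref{pro:sufficient_condition}: the state equation from linearity of $\mathrm{H}$ in $\boldsymbol{\lambda}$, the costate by direct differentiation, and the terminal condition from $\Phi$. Your entrywise computation $[\partial f_2/\partial U]_{i,j}=a\,r_i\lambda_j$ is exactly that specialization. Strictly, it corresponds to replacing $\log(M)$ by $M-I$ rather than by the identity, so that $-\log(I-aX)$ becomes $aX$; both have Fr\'echet derivative $E\mapsto E$, so your check still holds. Your remark on the undirected case \eqref{eq:adm_set} is sound and goes slightly beyond what the paper says: admissible variations are symmetric, so the first-order condition is unchanged if $\alpha U+a\mathbf{r}\boldsymbol{\lambda}^\top$ is replaced by its symmetric part, and the Frobenius projection onto \eqref{eq:adm_set}, whose constraints are invariant under transposition, performs that symmetrization automatically.
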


At this stage, we have derived all the analytical expressions required to implement Algorithm~\ref{alg:nesterov_pg}. It remains to address their discretization and to develop an efficient computational strategy, which we discuss in the next Section~\ref{sec:discretizing_and_solving}.

\section{Discretizing and solving the problem}\label{sec:discretizing_and_solving}

To solve the problems in~\eqref{eq:dynamic-receive}, \eqref{eq:dynamic-receive-simplified} and~\eqref{eq:dynamic-receive-simplified-undirected} using the PMP framework introduced in Section~\ref{sec:PMP} and summarized in Algorithm~\ref{alg:nesterov_pg}, we first need to discretize the expressions for the gradient, as well as the state and costate equations. 

Once this is achieved, we apply a projected gradient method enhanced with the Nesterov's acceleration to compute an approximation of the optimal control $U(t)$ at the selected discretization time steps. In our implementation, all norms are induced by the Frobenius inner product on control matrices $U(t)$, i.e., aggregated across all time steps. We use the tolerances $\varepsilon_g $ and $\varepsilon_x$ for monitoring the convergence of the projected gradient Algorithm~\ref{alg:nesterov_pg}.

In the following sections, we detail the components of the resulting algorithm. Specifically, we begin by deriving an appropriate discretization scheme for the state and costate equations. We then address the efficient computation of matrix-function--vector products (Section~\ref{sec:krylov_stuff}), and finally discuss the evaluation of the projection operators (Section~\ref{sec:computing_projections}).

\subsection{Selecting a time-stepping method for the state and costate equations}\label{sec:discretize_state_and_costate}

To evaluate the conditions in Proposition~\ref{pro:sufficient_condition}, i.e., update the state and costate in lines~\ref{line:solve_state} and~\ref{line:solve_costate} of Algorithm~\ref{alg:nesterov_pg}, we need to march in time two non-autonomous~ODEs
\begin{equation}\label{eq:state_for_r}
    \begin{cases}
    \displaystyle \dot{\mathbf{r}}(t) = \frac{\partial \mathrm{H}}{\partial  \boldsymbol{\lambda}}  =   -b \left( \mathbf{r}(t) - \mathbf{1} \right) - \log \left( I - a (A(t) + U(t))^\top \right) \mathbf{r}(t), \\
    \mathbf{r}(0) = \mathbf{1}, 
\end{cases} 
\end{equation}
and
\begin{equation}\label{eq:costate_for_r}
    \begin{cases}\displaystyle  \dot{\boldsymbol{\lambda}}(t) =-\frac{\partial \mathrm{H}}{\partial \mathbf{r}}  =  b \boldsymbol{\lambda}(t) + \log \left( I - a (A(t) + U(t)) \right) \boldsymbol{\lambda}(t), \\
\boldsymbol{\lambda}(T) = \mathbf{r}(T) - \mathbf{r}^*.
\end{cases}
\end{equation}
In general, we consider the adjacency matrix $A(t)$ at a finite set of time instants, $\{A(t_k)\}_k$. However, these discrete observations are often insufficient to define a difference equation with reasonable accuracy, especially when the time intervals $h_k = t_{k+1} - t_k$ are large. To address this, it is typically assumed that $A(t)$ is a piecewise constant function over the intervals $[t_k, t_{k+1})$. While this assumption simplifies the representation, it results in the dynamics of the corresponding ODE being only piecewise continuous. 

To effectively integrate the ODE systems \eqref{eq:state_for_r}--\eqref{eq:costate_for_r}, it is necessary to account for the discontinuities in the time-dependent adjacency matrix $A(t)$. Under the piecewise constant assumption, the right-hand side of both equations is only piecewise continuous, with possible jumps at the time instants $\{t_k\}_k$.

To handle this structure, we adopt an explicit Euler scheme applied separately on each interval $[t_k, t_{k+1})$ with a fixed step size $h$. More precisely, within each interval, the dynamics are integrated assuming $A(t)$ is constant and equal to $A(t_k)$. The numerical solution is then advanced step by step until reaching $t_{k+1}$.

At each switching time $t_{k+1}$, the value of the numerical solution obtained at the end of the interval $[t_k, t_{k+1})$ is used as the initial condition for the subsequent interval $[t_{k+1}, t_{k+2})$. In this way, the integration procedure consistently propagates the solution across intervals while properly accounting for the discontinuities in $A(t)$. While we selected the Euler method to control the computational cost, higher-order time-stepping methods can in principle improve the accuracy of the approximation. %

\subsection{Evaluating the matrix logarithm and its Fréchet derivative}\label{sec:krylov_stuff}

To apply the procedure to a large time-dependent network we need a reliable way to evaluate for a large, sparse matrices \(M\) the matrix-vector product \(\log(M)\mathbf{v}\). Instead of explicitly forming \(\log(M)\), we employ Krylov-type subspace methods to provide an efficient way to approximate \(\log(M)\mathbf{v}\).

The key idea behind Krylov methods is to approximate the action of \(\log(M)\) on \(\mathbf{v}\) by working on the Krylov subspace of dimension \(k\) 
\[\mathcal{K}_k(M,\mathbf{v}) = \{\mathbf{v}, M\mathbf{v}, M^2\mathbf{v}, \ldots, M^{k-1}\mathbf{v}\}.\]
The approximation in this subspace becomes more manageable by working on a smaller, projected, version of \(M\). The process begins by constructing an orthonormal basis $Q_k$ for the Krylov subspace via the Lanczos method~\cite[Algorithm~6.15]{SaadBook} for a symmetric $M$ or via the Arnoldi method~\cite[Algorithm~6.2]{SaadBook} for a general matrix. The result is a set of basis vectors that are used to form a small tridiagonal or Hessenberg matrix \(H_k\), respectively, which represents \(M\) in the reduced subspace as $H_k = Q_k^\top M Q_k$.  Once the projection is complete, we compute the matrix logarithm of \(H_k\) explicitly~\cite{MR3080997,MR1824061} and obtain the approximation 
\[
\log(M)\mathbf{v} \approx Q_k \log(H_k) (Q_k^\top \mathbf{v}).
\]
For the large number of evaluations that we need to march in time the solution to the state, costate equation and Hamiltonian gradient in Proposition~\ref{pro:sufficient_condition} polynomial Krylov methods are particularly appealing. To march in time the solution of the differential equation for the state and the costate we can resort to the Lanczos method in the case where the underlying network is undirected at every instant of time and we seek a symmetric control $U(t)$ to preserve the overall structure.

To evaluate the gradient of the Hamiltonian we need to compute the Fr\'echet derivative of the matrix logarithm. By~\eqref{eq:explicit-frechet}, the Fr\'echet derivative $\mathrm{L}_{\log}(M,E)$ can be extracted from the matrix logarithm of a $2n \times 2n$ block matrix. For each direction vector $\mathbf{v}$, this amounts to computing the $n$-dimensional vector
\[
\mathrm{L}_{\log}(M,E)\mathbf{v} = \begin{bmatrix} I & O \end{bmatrix} \log \begin{pmatrix} M & E \\ O & M \end{pmatrix} \begin{bmatrix} \mathbf{0} \\ \mathbf{v} \end{bmatrix}, \quad M = I - a(A(t_k) + U_k),
\]
for $U_k$ the approximation of $U(t_k)$ induced by the choice of the time-stepping procedure. Although polynomial Krylov methods perform well for the state and costate equations, where the matrix $M$ has spectrum in the right half-plane and the Arnoldi projection preserves this property reliably, the situation is different for the $2n \times 2n$ block matrix above. For this doubled system, the projected Hessenberg matrix $H_k$ can develop eigenvalues with negative real part, even when the spectrum of the original block matrix lies entirely in the right half-plane. Since the principal logarithm has a branch cut on $(-\infty,0]$, such spurious eigenvalues render the evaluation of $\log(H_k)$ unreliable. To address this, we employ \emph{rational} Krylov methods~\cite{Ruhe1998,GuettelKnizhnerman2013}, which replace the polynomial Krylov subspace by the rational Krylov subspace
\[
\mathcal{Q}_k(M,\mathbf{b},\Xi) = \operatorname{span}\left\{ \prod_{i=1}^{k}(M - \xi_i I)^{-1} \, p(M)\mathbf{b} \;:\; p \in \mathcal{P}_{k-1} \right\},
\]
where $\Xi = \{\xi_1,\ldots,\xi_k\}$ is a prescribed set of \emph{poles}. Poles at infinity recover the standard polynomial Krylov step. The rational Arnoldi decomposition~\cite{BerljafaGuettel2015} produces an orthonormal basis $V_k$ and upper-Hessenberg pencil $(K_k,H_k)$ such that
\[
M \, V_k K_k = V_k H_k.
\]
The small projected matrix $\hat M_k = K_k^{-1} H_k$ captures the spectral information of $M$ relevant to the starting vector, and the approximation reads
\begin{equation}\label{eq:rat_krylov}
    \log(M)\mathbf{b} \approx V_k\, K_k \log(\hat M_k)\, (K_k^{-1} e_1 \|\mathbf{b}\|_2).
\end{equation}
In practice, the quality of the rational approximation depends crucially on the choice of poles. We select them automatically by computing an AAA rational approximant~\cite{NakatsukasaSeteEtAl2018} of $\log(x)$ on a set of sample points covering the interval $[1 - a \max_t \rho(A(t)), 1 + a \max_t \rho(A(t))]$, where $a > 0$ is the edge attenuation parameter of the network model. The poles of the resulting rational function provide near-optimal shifts for the rational Krylov iteration. In this way, the number of poles (and therefore the Krylov subspace dimension) is determined adaptively by the AAA algorithm and is typically small (on the order of $10$--$20$) even when polynomial Krylov methods would require significantly more iterations.

We apply this rational Krylov strategy to the $2n \times 2n$ block matrix from~\eqref{eq:explicit-frechet}. The block upper-triangular structure of this matrix is exploited in the operator-vector products and shifted system solves required by the rational Arnoldi iteration: each shifted solve reduces to two triangular back-substitutions with the $n\times n$ shifted matrix $\nu M - \mu I$, preserving the sparsity of the original problem.

\subsection{Computing the projections}\label{sec:computing_projections}

The projected gradient method in Algorithm~\ref{alg:nesterov_pg} requires the computation of the projection operator
$\Pi_{\mathcal{U}_{ad}}:\mathbb{R}^{m}\rightarrow\mathcal{U}_{ad}$ onto the admissible control set. The dimension $m$ corresponds to the number of independent control variables after taking into account the sparsity pattern $\mathcal{P}$. In the general directed case, $m$ is equal to the number of admissible entries of the pattern. In the symmetric case~\eqref{eq:adm_set}, the number of variables is further reduced by considering only one triangular part of the control matrix, since the remaining entries are imposed by the condition $U(t)=U^\top(t)$.

The sparsity constraint
$[U(t)]_{i,j}=0$ if $(i,j)\notin\mathcal{P}$
is therefore incorporated directly into the choice of optimization variables and does not need to be enforced explicitly during the projection step.

For the admissible sets~\eqref{eq:adm_set_directed} and~\eqref{eq:adm_set}, the constraint
$U(t)\geq -A(t)$
is an elementwise lower bound. After discretizing the control variables in time and introducing the vectorization operator $\operatorname{vec}(\cdot)$, the projection onto this constraint reduces to a componentwise clipping operation.

Let $\mathbf{U}^{(k)}$ denote the vectorized control at iteration $k$ and let $G^{(k)}$ denote the gradient of the objective functional. The projected gradient update is
\begin{align*}
\mathbf{U}^{(k+1)}
&=
\Pi_{\mathcal{U}_{ad}}
\left(
\mathbf{U}^{(k)}
-\eta G^{(k)}
\right)
\nonumber \\
&=
\max
\left\{
\mathbf{U}^{(k)}
-\eta
\left(
\alpha\mathbf{U}^{(k)}
+
a\,\operatorname{vec}
(\boldsymbol{\lambda}\mathbf{r}^{\top})
\right),
-\operatorname{vec}(\mathbf{A})
\right\},
\end{align*}
where the maximum is taken componentwise. Here $\mathbf{A}$ denotes the vectorized adjacency matrix corresponding to the considered time step.

For the admissible set~\eqref{eq:admset_original}, the constraint involving the Katz vector is given by
\[
U(t)\boldsymbol{\mu}(t)
\leq
\left(\frac{1}{a}-\epsilon\right)\mathbf{1}.
\]
Using the identity $\operatorname{vec}(U\boldsymbol{\mu}) = (\boldsymbol{\mu}^{\top}\otimes I_n)\operatorname{vec}(U)$,
the constraint can be written as a linear inequality in the vectorized control variables:
\begin{align*}
(\boldsymbol{\mu}(t)^\top\otimes I_n)\mathbf{U}(t)
\leq
\left(\frac{1}{a}-\epsilon\right)\mathbf{1}.
\end{align*}
The unconstrained gradient step is first computed independently at each time instant. E.g., for the logarithmic dynamics, this gives
\begin{align*}
\widetilde{\mathbf{U}}^{(k)}
=
\mathbf{U}^{(k)}
-
\eta
\left[
\alpha\mathbf{U}^{(k)}
+
a\,\operatorname{vec}
\left(
\mathrm{L}_{\log}
\left(
I-a(A(t_k)+U^{(k)})^\top,
\boldsymbol{\lambda}(t_k)\mathbf{r}(t_k)^\top
\right)
\right)
\right].
\end{align*}
The projection is then obtained by solving the quadratic program
\begin{align*}
\min_{\mathbf{U}\in\mathbb{R}^{m}}
\quad&
\frac{1}{2}
\left\|
\mathbf{U}-\widetilde{\mathbf{U}}^{(k)}
\right\|_2^2
\\
\text{s.t.}\quad&
(\boldsymbol{\mu}(t_k)^\top\otimes I_n)\mathbf{U}
\leq
\left(\frac{1}{a}-\epsilon\right)\mathbf{1},
\\
&
\mathbf{U}\geq-\mathbf{A}(t_k).
\end{align*}
This projection is a convex quadratic program because both the objective and the constraints are convex. The case in~\eqref{eq:adm_set_directed}
can be handled in an entirely analogous manner, leading to a similar quadratic program where the first linear constraint is replaced by $(\mathbf{1}^{\top} \otimes I_n)\mathbf{U} \leq \left(\frac{\pi}{a}-\epsilon\right)\mathbf{1} - A(t_k)\mathbf{1}$.
Finally, for the symmetric admissible set~\eqref{eq:adm_set}, the equality constraint $U(t)=U^\top(t)$ is enforced by a reduced parametrization. Let $\mathcal{I}_{\mathcal{P}}$ denote the set of admissible entries in the upper triangular part of the sparsity pattern. The independent control variables are collected in
$
\mathbf{u}(t)
=
\{U_{ij}(t):(i,j)\in\mathcal{I}_{\mathcal{P}},\ i\leq j\}$. 
A linear expansion operator $S$ reconstructs the complete symmetric matrix according to $
\operatorname{vec}(U(t))
=
S\mathbf{u}(t)$. The projected gradient step is therefore performed in the reduced variable space:
$\mathbf{u}^{(k+1)}
=
\Pi_{\mathcal{U}_{ad}^{s}}
\left(
\mathbf{u}^{(k)}
-\eta g^{(k)}
\right)$, where the reduced gradient is obtained from the full matrix gradient through
$g^{(k)}
=
S^\top
\operatorname{vec}
\left(
\nabla_U J(U^{(k)})
\right)$.
For the elementwise admissible constraint, the projection remains a componentwise clipping operation, but it is applied only to the independent triangular variables
\begin{align*}
u_{ij}^{(k+1)}
=
\max
\left\{
u_{ij}^{(k)}
-\eta g_{ij}^{(k)},
-A_{ij}(t_k)
\right\},
\qquad
(i,j)\in\mathcal{I}_{\mathcal{P}}.
\end{align*}
The remaining entries are recovered automatically by symmetry, namely
$U_{ji}=U_{ij}$.
Consequently, the equality constraint $U=U^\top$ is satisfied by construction and no additional projection step is required.

\section{Numerical examples}\label{sec:numerical_examples}

In this section we consider numerical experiments to validate the theory discussed in Section~\ref{sec:PMP}. In Section~\ref{sec:small_examples} we discuss a few small-size cases for which we can easily visualize the obtained controls and states to verify that the results are the expected ones. Then, in Section~\ref{sec:large_examples}, we investigate a larger case scenario in which we employ the techniques discussed in Section~\ref{sec:discretizing_and_solving} to reduce the computational costs. %
The implementation is done in Matlab (\texttt{v9.10.0.1602886 (R2021a)}), and the experiments are run on a single node (AMD EPYC 7763, 2 Threads per core, 64 cores per socket, 2 sockets, 2 \si{\tera\byte} of RAM, Ubuntu 22.04.4 LTS) of the Toeplitz cluster at the Green Data Center (GDC) of the University of Pisa. For the AAA algorithm~\cite{NakatsukasaSeteEtAl2018} we employ the implementation available in the Chebfun~\cite{driscoll2014chebfun} package (\texttt{v5.7.0}). For the generation of the Rational Krylov subspace we use instead the Rational Krylov Toolbox for Matlab (RKToolbox)~\cite{BerljafaGuettel2015} (\texttt{v2.9}).
\subsection{Small descriptive examples}\label{sec:small_examples}

The focus of this section is on small-scale examples where the results can be effectively visualized. We begin with the case of the temporal network illustrated in Figure~\ref{fig:switching-tree-shortcut}.
\begin{figure}[htb]
    \centering
    \subfloat[$A(0)$]{\input{phonecall2_1}}\hfil
    \subfloat[$A(1)$]{\input{phonecall2_2}}\hfil
    \subfloat[$A(2)$]{\input{phonecall2_3}}\hfil
    \subfloat[$A(3)$]{\input{phonecall2_4}}\hfil
    \subfloat[$A(4)$]{\input{phonecall2_5}}
    
    \subfloat[$A(5)$]{\input{phonecall2_6}}\hfil
    \subfloat[$A(6)$]{\input{phonecall2_7}}\hfil
    \subfloat[Depiction of the original dynamic receive centrality $\mathbf{r}(7)$ and the desired state $\mathbf{r}^*$ using the linear dynamic in~\eqref{eq:dynamic-receive-simplified}.\label{fig:switching-tree-shortcut:states}]{\input{phonecalltwodesiredstate1}}

    \caption{Example from~\cite[Figure~3]{HighamNet}, links are active over non-overlapping time intervals cycling $t \mod{7}$. We consider a modified target state $\mathbf{r}^*$ which is $\mathbf{r}^* \equiv \mathbf{r}(7)$ with $\mathbf{r}_3^* = \mathbf{r}_4^* = 1.2$.}
    \label{fig:switching-tree-shortcut}
\end{figure}
This network, originally presented in~\cite{HighamNet}, is a synthetic telephone-call example with $n = 17$ nodes evolving through seven snapshots. The nodes represent callers and are visibly organized in the form of a tree, at time step $0$ two callers start the diffusion of an information that triggers a cascade of calls between the callers. We first compute the uncontrolled dynamic receive centrality $\mathbf{r}(t)$ over $t \in [0,7]$, which measures the importance as receiver at time $t$ of the different callers, then we define a perturbed target by setting $\mathbf{r}^* \equiv \mathbf{r}(7)$, $\mathbf{r}_3^* = \mathbf{r}_4^* = 1.2$,
while keeping all other components equal to the uncontrolled final state (Figure~\ref{fig:switching-tree-shortcut:states}). Namely, we want to just increase the importance as receivers of two of the callers.

The admissible pattern is
$
\mathcal{P} = \{ (i,j) \in V\times V\, :\, (i,j) \in E_t \text{ for some } t\in[0,T]\}$,
that is, every edge that appears at least once in the temporal sequence can be controlled. Time integration is performed with the forward Euler scheme discussed in Section~\ref{sec:discretize_state_and_costate}, using step size $h=10^{-2}$. We then solve problem~\eqref{eq:dynamic-receive} by Algorithm~\ref{alg:nesterov_pg} for three values of the Tikhonov weight $\alpha\in\{1,0.5,0.05\}$, and $\eta = 10^{-1}$.
 \begin{figure}[htbp]
     \centering

    \subfloat[Convergence Curve\label{fig:convergence_curve_small_log}]{
\begin{tikzpicture}
\begin{axis}[
    width=0.38\columnwidth,
    height=0.38\columnwidth,
    xlabel={Iteration},
    ylabel={$\|\mathbf{g}^{(k)}\|$},
    legend pos=north east,
    grid=major,
    ymode=log,
    xmode=log,
    xmin=1,
    xmax=1100,
    legend columns = 2,
    legend style={
        at={(0.45,1)},
        legend cell align=left,
        align=left,
        anchor=south,
        draw=none,
        fill=none,
    }
]
\addplot[
    blind1,
    thick,
] table[
    x index=0,
    y index=1,
] {convergence_alpha1.dat};
\addlegendentry{$\alpha = 1$}
\addplot[
    blind2,
    thick,
] table[
    x index=0,
    y index=1,
] {convergence_alpha05.dat};
\addlegendentry{$\alpha = 0.5$}
\addplot[
    blind3,
    thick,
] table[
    x index=0,
    y index=1,
] {convergence_alpha005.dat};
\addlegendentry{$\alpha = 0.05$}
\end{axis}
\end{tikzpicture}
    }
 \subfloat[Objective\label{fig:objective_small_log}]{%
\begin{tikzpicture}
\begin{axis}[
    width=0.38\columnwidth,
    height=0.38\columnwidth,
    ybar,
    bar width=10pt,
    ylabel={$J$},
    xlabel={$\alpha$},
    xtick={1,2,3},
    xticklabels={1,0.5,0.05},
    ymin=0,
    ymax=6e-3,
    enlarge x limits=0.3,
    grid=major,
]

\addplot+[ybar, bar shift=0pt, fill=blind1, draw=blind1,
          nodes near coords] coordinates {(1,4.792506e-03)};

\addplot+[ybar, bar shift=0pt, fill=blind2, draw=blind2,
          nodes near coords] coordinates {(2,3.866826e-03)};

\addplot+[ybar, bar shift=0pt, fill=blind3, draw=blind3,
          nodes near coords] coordinates {(3,9.314164e-04)};

\end{axis}
\end{tikzpicture}
}

\subfloat[Controlled solution at $t = 7$\label{fig:dynamic-receive-panel-c}]{
\input{solution-phone}
}
     
    \caption{Example taken from \cite[Figure~3]{HighamNet}. Panel~\ref{fig:convergence_curve_small_log} displays the gradient norm history, while panel~\ref{fig:objective_small_log} shows the value of the cost functional $J$ at convergence. Panel~\ref{fig:dynamic-receive-panel-c} compares the target state, the uncontrolled dynamics, and the controlled dynamics for problem~\eqref{eq:dynamic-receive}.}
     \label{fig:switching-tree-shortcut:results}
 \end{figure}
The convergence curves in Figure~\ref{fig:switching-tree-shortcut:results}(a) show the expected trade-off between control effort and iteration count. For $\alpha=1$ the method reaches $\|\mathbf{g}^{(k)}\|\approx 8.43\times 10^{-7}$ in $24$ iterations, while for $\alpha=0.5$ it reaches $\|\mathbf{g}^{(k)}\|\approx 8.80\times 10^{-7}$ in $31$ iterations; the low-regularization case $\alpha=0.05$ requires substantially more iterations ($604$) and reaches $\|\mathbf{g}^{(k)}\|\approx 1.40\times 10^{-6}$.

The objective values in Figure~\ref{fig:switching-tree-shortcut:results}(b) are consistent with this behavior: reducing $\alpha$ yields a lower final cost, from $J\approx 4.80\times 10^{-3}$ ($\alpha=1$) to $J\approx 3.87\times 10^{-3}$ ($\alpha=0.5$) and $J\approx 9.31\times 10^{-4}$ ($\alpha=0.05$). Panel~\ref{fig:dynamic-receive-panel-c} confirms that the controlled terminal state moves the two modified components toward the prescribed target, and that lower regularization improves the terminal matching while preserving the qualitative structure of the uncontrolled profile on the remaining components.

We repeat the same test for the linear dynamics in~\eqref{eq:dynamic-receive-simplified}, using exactly the same target-design procedure as above. Namely, we first compute the uncontrolled trajectory up to $t=7$, then define the desired terminal state by replacing only the components at nodes $3$ and $4$ with the value $1.2$, while leaving all other entries unchanged. Figure~\ref{fig:switching-tree-shortcut-linear:results} reports the corresponding convergence histories and final objective values.
 \begin{figure}[htbp]
     \centering

    \subfloat[Convergence Curve\label{fig:switching-tree-shortcut-linear:results-a}]{
\begin{tikzpicture}
\begin{axis}[
    width=0.38\columnwidth,
    height=0.38\columnwidth,
    xlabel={Iteration},
    ylabel={$\|\mathbf{g}^{(k)}\|$},
    legend pos=north east,
    grid=major,
    ymode=log,
    xmode=log,
    xmin=1,
    xmax=1500,
    legend columns = 2,
    legend style={
        at={(0.45,1)},
        legend cell align=left,
        align=left,
        anchor=south,
        draw=none,
        fill=none,
    }
]
\addplot[
    blind1,
    thick,
] table[
    x index=0,
    y index=1,
] {convergence_linear_alpha1.dat};
\addlegendentry{$\alpha = 1$}
\addplot[
    blind2,
    thick,
] table[
    x index=0,
    y index=1,
] {convergence_linear_alpha05.dat};
\addlegendentry{$\alpha = 0.5$}
\addplot[
    blind3,
    thick,
] table[
    x index=0,
    y index=1,
] {convergence_linear_alpha005.dat};
\addlegendentry{$\alpha = 0.05$}
\end{axis}
\end{tikzpicture}
    }
 \subfloat[Objective\label{fig:switching-tree-shortcut-linear:results-b}]{%
\begin{tikzpicture}
\begin{axis}[
    width=0.38\columnwidth,
    height=0.38\columnwidth,
    ybar,
    bar width=10pt,
    ylabel={$J$},
    xlabel={$\alpha$},
    xtick={1,2,3},
    xticklabels={1,0.5,0.05},
    ymin=0,
    ymax=9e-3,
    enlarge x limits=0.3,
    grid=major,
]

\addplot+[ybar, bar shift=0pt, fill=blind1, draw=blind1,
          nodes near coords] coordinates {(1,7.350360e-03)};

\addplot+[ybar, bar shift=0pt, fill=blind2, draw=blind2,
          nodes near coords] coordinates {(2,6.117794e-03)};

\addplot+[ybar, bar shift=0pt, fill=blind3, draw=blind3,
          nodes near coords] coordinates {(3,1.924513e-03)};

\end{axis}
\end{tikzpicture}%
}

     \caption{Example taken from \cite[Figure~3]{HighamNet}. Panel~\ref{fig:switching-tree-shortcut-linear:results-a} displays the gradient norm history, while panel~\ref{fig:switching-tree-shortcut-linear:results-b} shows the value of the cost functional $J$ at convergence. \label{fig:switching-tree-shortcut-linear:results}}
 \end{figure}
The linear case exhibits the same qualitative dependence on the Tikhonov weight $\alpha$ observed in the logarithmic model. From Figure~\ref{fig:switching-tree-shortcut-linear:results-a}, for $\alpha=1$ the method converges in $54$ iterations with $\|\mathbf{g}^{(k)}\|\approx 8.33\times 10^{-7}$, for $\alpha=0.5$ in $102$ iterations with $\|\mathbf{g}^{(k)}\|\approx 9.93\times 10^{-7}$, and for $\alpha=0.05$ in $615$ iterations with $\|\mathbf{g}^{(k)}\|\approx 1.75\times 10^{-6}$. As expected, weaker regularization reduces the final value of the cost functional: Figure~\ref{fig:switching-tree-shortcut-linear:results-b} gives $J\approx 7.35\times 10^{-3}$ ($\alpha=1$), $J\approx 6.12\times 10^{-3}$ ($\alpha=0.5$), and $J\approx 1.92\times 10^{-3}$ ($\alpha=0.05$). Overall, this confirms the same accuracy-versus-effort trade-off: smaller $\alpha$ yields better terminal matching at the price of more iterations. %

\subsection{Larger scale example}\label{sec:large_examples}

As a larger-scale example, we consider the \texttt{CollegeMsg} temporal network~\cite{Panzarasa2009911}, a dataset of private messages exchanged on an online social network at the University of California, Irvine. The dataset records directed interactions: an edge $(u, v, t)$ indicates that user $u$ sent a private message to user $v$ at Unix timestamp\footnote{The Unix timestamp is the number of seconds between a particular date and the Unix Epoch (January 1\textsuperscript{st}, 1970 UTC).} $t$, yielding a total of $n = 1899$ users and over $59\,000$ timestamped communications.

To construct the time-varying adjacency matrices, we partition the full observation period uniformly into $n_t = 30$ time windows and normalize the resulting breakpoints to the interval $[0, 10]$; see Figure~\ref{fig:messagenet-window}. For each window $i = 1, \ldots, n_t - 1$, we collect all edges with timestamps in $[t_i, t_{i+1})$ and assemble the sparse adjacency matrix $A_i \in \mathbb{R}^{n \times n}$, where each nonzero entry records an observed message. The communication activity varies considerably across windows, as reflected by the spread in the number of nonzeros $\operatorname{nnz}(A_i)$ shown in Figure~\ref{fig:messagenet-nnz}. The parameter $a$ in the centrality model is set to $a = 1/(\rho_{\max} + 1)$, where $\rho_{\max} = \max_i \rho(A_i)$ is the largest spectral radius across all snapshots, ensuring that $I - aA_i$ admits a matrix logarithm throughout the horizon; see again Remark~\ref{rem:matrix_log_existence}.
\begin{figure}[htbp]
    \centering
    \subfloat[Uniform time partition over the observation period\label{fig:messagenet-window}]{\input{time-partition}}\hfil
    \subfloat[Number of nonzero entries per snapshot\label{fig:messagenet-nnz}]{\input{nnz_message}}
    
    \caption{\texttt{CollegeMsg}~\cite{Panzarasa2009911} dataset ($n=1899$ nodes, $n_t-1=29$ snapshots). Left: the $30$ uniformly spaced breakpoints, normalized to $[0,10]$, that define the time windows; each dot on the message-count curve marks a window boundary. Right: histogram of the number of nonzero entries $\operatorname{nnz}(A_i)$ across the $29$ adjacency matrix snapshots, illustrating the strong variability in communication activity over time.}
    \label{fig:messagenet}
\end{figure}

Before running the full optimization, we validate the Krylov method for the matrix logarithm action described in Section~\ref{sec:discretizing_and_solving}. For each snapshot $A_i^\top$, we apply \texttt{logmv} with tolerance $10^{-6}$ and a maximum of $40$ Arnoldi iterations to compute $\log(I - a A_i^\top)\mathbf{e}$, where $\mathbf{e}$ is a fixed random Gaussian vector, and compare the result against the dense reference~\cite{MR3080997} $\texttt{logm}(I - aA_i^\top)\mathbf{e}$. The absolute and relative errors, together with the number of Krylov iterations $k$ required to meet the stopping criterion, are reported in Table~\ref{tab:krylov_logm_convergence}. Across all $29$ snapshots the method converges in at most $18$ iterations, and both absolute and relative errors remain well below $10^{-7}$, confirming that the Krylov-based procedure from Section~\ref{sec:discretizing_and_solving} is an effective method for the matrices arising in this dataset.
\begin{table}[htbp]
\caption{Convergence of the Krylov approximation of $\log(I - aA_i^\top)\allowbreak \mathbf{e}$ for each snapshot of the \texttt{Col\-lege\-Msg} \cite{Panzarasa2009911} dataset. $k$: number of Arnoldi iterations; absolute and relative errors against the dense reference. Tolerance $10^{-6}$, maximum $40$ iterations.}
\label{tab:krylov_logm_convergence}
\centering
\setlength{\tabcolsep}{0.8em}
\begin{tabular}{rrrr@{\qquad}rrrr}
\toprule
$i$ & $k$ & Abs.\ error & Rel.\ error & $i$ & $k$ & Abs.\ error & Rel.\ error \\
\midrule
 1 &  6 & $4.78\times10^{-11}$ & $1.75\times10^{-10}$ & 16 &  9 & $2.48\times10^{-9}$  & $2.53\times10^{-9}$  \\
 2 & 11 & $4.39\times10^{-9}$  & $2.41\times10^{-9}$  & 17 &  8 & $2.11\times10^{-9}$  & $3.96\times10^{-9}$  \\
 3 & 17 & $1.39\times10^{-8}$  & $3.89\times10^{-9}$  & 18 &  8 & $4.89\times10^{-10}$ & $9.41\times10^{-10}$ \\
 4 & 15 & $2.18\times10^{-8}$  & $6.01\times10^{-9}$  & 19 &  9 & $6.61\times10^{-10}$ & $1.38\times10^{-9}$  \\
 5 & 14 & $1.84\times10^{-8}$  & $7.69\times10^{-9}$  & 20 & 10 & $2.58\times10^{-9}$  & $2.76\times10^{-9}$  \\
 6 & 18 & $4.48\times10^{-8}$  & $4.61\times10^{-9}$  & 21 &  9 & $4.60\times10^{-10}$ & $5.98\times10^{-10}$ \\
 7 & 14 & $1.26\times10^{-8}$  & $3.66\times10^{-9}$  & 22 &  8 & $1.74\times10^{-10}$ & $4.54\times10^{-10}$ \\
 8 &  8 & $2.02\times10^{-9}$  & $1.65\times10^{-9}$  & 23 &  8 & $5.13\times10^{-9}$  & $5.36\times10^{-9}$  \\
 9 & 13 & $5.38\times10^{-9}$  & $3.16\times10^{-9}$  & 24 &  8 & $8.96\times10^{-10}$ & $1.04\times10^{-9}$  \\
10 &  7 & $1.06\times10^{-9}$  & $2.82\times10^{-9}$  & 25 &  8 & $3.30\times10^{-8}$  & $9.92\times10^{-9}$  \\
11 &  6 & $1.82\times10^{-10}$ & $1.09\times10^{-9}$  & 26 &  8 & $3.97\times10^{-10}$ & $8.93\times10^{-10}$ \\
12 & 10 & $4.63\times10^{-9}$  & $6.72\times10^{-9}$  & 27 &  8 & $9.75\times10^{-10}$ & $2.20\times10^{-9}$  \\
13 &  9 & $1.76\times10^{-9}$  & $2.99\times10^{-9}$  & 28 &  6 & $1.17\times10^{-9}$  & $5.05\times10^{-9}$  \\
14 &  8 & $4.11\times10^{-9}$  & $7.28\times10^{-9}$  & 29 &  7 & $8.05\times10^{-11}$ & $4.53\times10^{-10}$ \\
15 &  8 & $3.26\times10^{-9}$  & $3.90\times10^{-9}$  &    &    &                      &                      \\
\bottomrule
\end{tabular}
\end{table}
We perform a similar validation for the Fr\'echet-derivative action used in the gradient evaluation,
\[
\mathrm{L}_{\log}(A_i^\top,\boldsymbol{\lambda}\mathbf{r}^\top)\mathbf{v}.
\]
For each snapshot $A_i^\top$, we set $M_i = I-aA_i$, generate fixed random Gaussian vectors $\boldsymbol{\lambda},\mathbf{r},\mathbf{v}\in\mathbb{R}^n$, and compare the Krylov approximation produced by the rational Krylov approximation~\eqref{eq:rat_krylov} against the dense reference $\mathbf{y}_{\mathrm{ref}} = \mathrm{L}_{\log}(M_i^\top,\boldsymbol{\lambda}\mathbf{r}^\top)\mathbf{v}$, 
computed with \texttt{logm\_frechet\_pade}~\cite{MR3080997}. For the Fr\'echet action we now use the rational Krylov routine with a fixed set of $11$ poles (obtained by AAA on the $[1 - a \max_t \rho(A(t)), 1 + a \max_t \rho(A(t))]$ interval). The absolute and relative errors are reported in Table~\ref{tab:krylov_frechet_partial}. Across all $29$ snapshots, the relative errors range from $1.07\times10^{-14}$ to $1.60\times10^{-7}$, confirming that the rational Krylov Fr\'echet action is highly accurate for gradient evaluations in this dataset.
\begin{table}[htbp]
\caption{Convergence of the rational Krylov approximation of $\mathrm{L}_{\log}(A_i^\top,\boldsymbol{\lambda}\mathbf{r}^\top)\mathbf{v}$ for each snapshot of the \texttt{CollegeMsg}~\cite{Panzarasa2009911} dataset. Absolute and relative errors against the dense reference. A fixed set of $11$ poles is used for all snapshots.}
\label{tab:krylov_frechet_partial}
\centering
\setlength{\tabcolsep}{1em}
\begin{tabular}{rrr@{\qquad}rrr}
\toprule
$i$ & Abs.\ error & Rel.\ error & $i$ & Abs.\ error & Rel.\ error \\
\midrule
1 & $1.06\times10^{-12}$ & $1.07\times10^{-14}$ & 16 & $1.45\times10^{-11}$ & $1.60\times10^{-13}$ \\
2 & $4.05\times10^{-9}$ & $2.91\times10^{-11}$ & 17 & $6.60\times10^{-12}$ & $6.99\times10^{-14}$ \\
3 & $7.80\times10^{-6}$ & $1.60\times10^{-7}$ & 18 & $6.87\times10^{-12}$ & $7.01\times10^{-14}$ \\
4 & $1.14\times10^{-6}$ & $5.76\times10^{-9}$ & 19 & $7.23\times10^{-12}$ & $7.63\times10^{-14}$ \\
5 & $2.08\times10^{-7}$ & $1.50\times10^{-9}$ & 20 & $2.97\times10^{-9}$ & $3.25\times10^{-11}$ \\
6 & $9.17\times10^{-6}$ & $3.41\times10^{-8}$ & 21 & $1.73\times10^{-11}$ & $1.98\times10^{-13}$ \\
7 & $1.51\times10^{-7}$ & $8.91\times10^{-10}$ & 22 & $6.46\times10^{-12}$ & $5.82\times10^{-14}$ \\
8 & $7.82\times10^{-12}$ & $8.33\times10^{-14}$ & 23 & $2.13\times10^{-11}$ & $3.56\times10^{-13}$ \\
9 & $4.78\times10^{-8}$ & $3.22\times10^{-10}$ & 24 & $1.16\times10^{-11}$ & $1.79\times10^{-13}$ \\
10 & $6.14\times10^{-12}$ & $5.90\times10^{-14}$ & 25 & $2.02\times10^{-10}$ & $2.13\times10^{-12}$ \\
11 & $6.61\times10^{-12}$ & $6.35\times10^{-14}$ & 26 & $6.31\times10^{-12}$ & $6.54\times10^{-14}$ \\
12 & $7.08\times10^{-10}$ & $8.97\times10^{-12}$ & 27 & $4.74\times10^{-12}$ & $5.19\times10^{-14}$ \\
13 & $1.26\times10^{-11}$ & $1.54\times10^{-13}$ & 28 & $6.29\times10^{-12}$ & $6.27\times10^{-14}$ \\
14 & $6.80\times10^{-12}$ & $8.21\times10^{-14}$ & 29 & $6.79\times10^{-12}$ & $6.36\times10^{-14}$ \\
15 & $7.08\times10^{-12}$ & $7.86\times10^{-14}$ & & & \\
\bottomrule
\end{tabular}
\end{table}

We now consider the general problem using the \texttt{CollegeMsg} temporal network with approximately 1,899 nodes and 29 snapshots. The adjacency matrices are interpolated at a uniform time step $\Delta t = 0.1$ across the normalized time domain, yielding $116$ discrete time levels for optimization. Following Remark~\ref{rem:log-uniqueness}, the edge attenuation parameter is set to $a = 1/(2\rho_{\max})$, where $\rho_{\max} = \max_k \rho(A_k)$ is the maximum spectral radius. The temporal decay rate is $b = 0.85$, and the control regularization weight is $\alpha = 1$. For the Krylov approximations, we use tolerance $10^{-6}$ and maximum 40 iterations for polynomial actions on state/costate, and a fixed set of 11 poles (chosen via AAA) for rational Krylov Fréchet actions. To decide the set of edges of the temporal network we can modify, we first build the sparsity pattern $P$ as the union of all temporal adjacency patterns, then we restrict it to only the edges incident to 50 randomly selected nodes, creating $814$ controllable entries per time step. Feasibility is enforced via the Katz-based linear constraint from Proposition~\ref{prop:relaxed_constraint}, ensuring $\rho(A(t) + U(t)) < 1/a$ at all times through convex quadratic projections with numerical margin $10^{-6}$. To decide the target state, we first simulate the uncontrolled receive centrality $\mathbf{r}^{\mathrm{nc}}(T)$ to terminal time $T = 10$. The target state $\mathbf{r}^*$ is a perturbed version: for each of the 50 controllable nodes, we set $r^*_i = \max(2 \cdot r^{\mathrm{nc}}_i \cdot \xi, 1)$ where $\xi \sim \mathrm{U}(0,1)$ is uniform random, creating a non-trivial steering objective.

We employ the Nesterov accelerated projected-gradient method with monotone restart from Algorithm~\ref{alg:nesterov_pg}. The algorithm runs for up to $2000$ iterations with fixed step size $\eta = 0.10$, momentum coefficient and the three stopping criteria: $\|\nabla_U J\| \le 10^{-6}$, relative step size $\le 10^{-8}$, or relative objective change $\le 10^{-8}$. Monotone restart rejects momentum if an extrapolated gradient step increases the objective, enhancing robustness.

At each iteration, the algorithm computes the state trajectory (forward Euler), costate trajectory (backward adjoint, reverse time), and gradient via matrix-free Krylov evaluations. The control gradient is computed employing the Fréchet action via rational Krylov with AAA poles on $[1-a \max_{t}\rho(A(t)),1+a \max_{t}\rho(A(t))] \approx [a, 2]$. Feasibility projections via quadratic programming are applied to each time-step control after the gradient step. The accuracy of the Krylov approximations (as validated in Table~\ref{tab:krylov_frechet_partial}) ensures that repeated state/costate/gradient evaluations remain stable throughout the optimization.

Figure~\ref{fig:nesterov_krylov} shows the convergence history of the Nesterov accelerated projected-gradient solver on the full temporal network problem. 
\begin{figure}[htb]
    \centering
    \begin{tikzpicture}
\begin{axis}[
    width=0.8\columnwidth,
    height=0.50\columnwidth,
    xlabel={Iteration (log scale)},
    ylabel={Projected-gradient norm $\|\mathbf{g}^{(k)}\|$},
    legend pos=north east,
    grid=major,
    ymode=log,
    xmode=log,
    xmin=1,
    xmax=2000,
    xtick={1, 10, 100, 1000},
    ytick={1e-7, 1e-6, 1e-5, 1e-4, 1e-3, 1e-2, 1e-1, 1e0},
    ymin=1e-8,
    ymax=6.62148712351667789732800884e+00,
    legend columns=1,
    legend style={
        at={(0.4, 0.018)},
        anchor=south east,
        draw=none,
        fill=none,
    }
]
\addplot[
    blind1,
    thick,
    mark=none,
] table[
    x index=0,
    y index=1,
] {converge_nesterov_krylov_newtransp.dat};
\addlegendentry{$\alpha = 1.0$}
\addplot[
    blind2,
    thick,
    mark=none,
] table[
    x index=0,
    y index=1,
] {convergence_nesterov_krylov_alpha01.dat};
\addlegendentry{$\alpha = 0.1$}
\addplot[
    style=dashed,
    color=black,
    thick,
] coordinates {(1,1e-6) (2000,1e-6)};
\addlegendentry{Tolerance $10^{-6}$}
\end{axis}
\end{tikzpicture}
\caption{Algorithm~\ref{alg:nesterov_pg} convergence on the \texttt{CollegeMsg} temporal network control problem.}
    \label{fig:nesterov_krylov}
\end{figure}
We note that even in the case of a larger-scale problem the behavior with respect to the regularization parameter is analogous to what was seen in the cases in Section~\ref{sec:small_examples}, the number of necessary iterations increases as $\alpha$ decreases, and the method declares convergence at iteration 862 with a projected gradient norm $\approx 3.663 \times 10^{-6}$, and a total convergence cost of $J \approx 3.758922$. The gradient tolerance is not satisfied, but the difference between two successive iterates has dropped below the threshold of $10^{-7}$ reaching $9.9 \times 10^{-8}$ and causing convergence to be declared.

\section{Conclusion and future perspectives}\label{sec:conclusion_and_future_perspectives}

In this work we formulated and analyzed a pattern-constrained optimal control problem for dynamic receive centrality on temporal networks. Using the Pontryagin Maximum Principle, we derived state, costate, and gradient conditions for both the logarithmic model and its linearized counterpart, and we introduced a practical projection strategy based on a sufficient linear constraint that preserves well-posedness of the logarithmic dynamics. On the algorithmic side, we adopted a Nesterov accelerated projected-gradient scheme with monotone restart, and we combined it with matrix-free Krylov approximations for both matrix-logarithm actions and Fr\'echet-derivative actions. The numerical results show that this combination is effective on both synthetic and real temporal data: the optimization procedure remains stable under pattern constraints, and Krylov-based evaluations provide accurate approximations at moderate subspace dimensions, making repeated state/costate/gradient evaluations computationally viable.

Several extensions can follow from the present framework. A first direction is a sharper theoretical analysis of convergence for the projected accelerated method in the nonconvex setting induced by the logarithmic dynamics, including quantitative guarantees for restart strategies. A second direction is to further reduce the cost of the Fr\'echet term by exploiting block, low-rank, or recycling Krylov spaces across consecutive time steps and optimization iterations. A third direction is to investigate adaptive and higher-order time integration tailored to piecewise-smooth temporal networks, balancing accuracy and runtime in large horizons. From a modeling perspective, it is also of interest to consider multi-objective formulations that jointly steer receive centrality and other dynamic-network functionals, as well as robust or stochastic variants that account for uncertainty in temporal edges. Finally, extending the framework to distributed and parallel implementations would be essential for very large networks, where sparse linear algebra and matrix-function actions become the dominant computational bottleneck.

\backmatter

\bmhead{Acknowledgements}

All the authors are members of the INdAM GNCS and acknowledge the MUR Excellence Department Project awarded to the Department of Mathematics, University of Pisa, CUP I57G22000700001.

\bmhead{Funding}
This research has been partially funded by the Italian Ministry of University and Research (MUR) through the PRIN 2022 ``MOLE: Manifold constrained Optimization and LEarning'',  code: 2022ZK5ME7 MUR D.D. financing decree n. 20428 of November 6th, 2024 (CUP B53C24006410006) and Fondo Italiano per la Scienza (FIS2023-01334) advanced grant ``ADvanced numerical Approaches for MUltiscale Systems with uncertainties"- ADAMUS.

\bmhead{Data availability}  The code to reproduce the numerical examples is available on the GitHub repository \href{https://github.com/Cirdans-Home/optimal-network-control}{Cirdans-Home/optimal-network-control}.

\section*{Declarations}

\bmhead{Ethics approval}
Not applicable
\bmhead{Conflict of interest}
 The authors declare no competing interests.

\bibliography{bibdynet}

\end{document}

%% file: phonecall2_1.tex
\begin{tikzpicture}[scale=0.5]
\Vertex[x=2.00,y=6.50,size=0.20,RGB,color={51,51,255}]{1}
\Vertex[x=2.00,y=5.50,size=0.20,RGB,color={51,51,255}]{2}
\Vertex[x=2.00,y=4.50,size=0.20,RGB,color={51,51,255}]{3}
\Vertex[x=1.33,y=3.50,size=0.20,RGB,color={51,51,255}]{4}
\Vertex[x=2.67,y=3.50,size=0.20,RGB,color={51,51,255}]{5}
\Vertex[x=0.80,y=2.50,size=0.20,RGB,color={51,51,255}]{6}
\Vertex[x=1.60,y=2.50,size=0.20,RGB,color={51,51,255}]{7}
\Vertex[x=2.40,y=2.50,size=0.20,RGB,color={51,51,255}]{8}
\Vertex[x=3.20,y=2.50,size=0.20,RGB,color={51,51,255}]{9}
\Vertex[x=0.44,y=1.50,size=0.20,RGB,color={51,51,255}]{10}
\Vertex[x=0.89,y=1.50,size=0.20,RGB,color={51,51,255}]{11}
\Vertex[x=1.33,y=1.50,size=0.20,RGB,color={51,51,255}]{12}
\Vertex[x=1.78,y=1.50,size=0.20,RGB,color={51,51,255}]{13}
\Vertex[x=2.22,y=1.50,size=0.20,RGB,color={51,51,255}]{14}
\Vertex[x=2.67,y=1.50,size=0.20,RGB,color={51,51,255}]{15}
\Vertex[x=3.11,y=1.50,size=0.20,RGB,color={51,51,255}]{16}
\Vertex[x=3.56,y=1.50,size=0.20,RGB,color={51,51,255}]{17}
\Edge[bend=-30](1)(3)
\Edge[bend=30](2)(17)
\end{tikzpicture}

%% file: phonecall2_2.tex
\begin{tikzpicture}[scale=0.5]
\Vertex[x=2.00,y=6.00,size=0.20,RGB,color={51,51,255}]{1}
\Vertex[x=2.00,y=5.00,size=0.20,RGB,color={51,51,255}]{2}
\Vertex[x=2.00,y=4.00,size=0.20,RGB,color={51,51,255}]{3}
\Vertex[x=1.33,y=3.00,size=0.20,RGB,color={51,51,255}]{4}
\Vertex[x=2.67,y=3.00,size=0.20,RGB,color={51,51,255}]{5}
\Vertex[x=0.80,y=2.00,size=0.20,RGB,color={51,51,255}]{6}
\Vertex[x=1.60,y=2.00,size=0.20,RGB,color={51,51,255}]{7}
\Vertex[x=2.40,y=2.00,size=0.20,RGB,color={51,51,255}]{8}
\Vertex[x=3.20,y=2.00,size=0.20,RGB,color={51,51,255}]{9}
\Vertex[x=0.44,y=1.00,size=0.20,RGB,color={51,51,255}]{10}
\Vertex[x=0.89,y=1.00,size=0.20,RGB,color={51,51,255}]{11}
\Vertex[x=1.33,y=1.00,size=0.20,RGB,color={51,51,255}]{12}
\Vertex[x=1.78,y=1.00,size=0.20,RGB,color={51,51,255}]{13}
\Vertex[x=2.22,y=1.00,size=0.20,RGB,color={51,51,255}]{14}
\Vertex[x=2.67,y=1.00,size=0.20,RGB,color={51,51,255}]{15}
\Vertex[x=3.11,y=1.00,size=0.20,RGB,color={51,51,255}]{16}
\Vertex[x=3.56,y=1.00,size=0.20,RGB,color={51,51,255}]{17}
\Edge(3)(4)
\end{tikzpicture}

%% file: phonecall2_3.tex
\begin{tikzpicture}[scale=0.5]
\Vertex[x=2.00,y=6.00,size=0.20,RGB,color={51,51,255}]{1}
\Vertex[x=2.00,y=5.00,size=0.20,RGB,color={51,51,255}]{2}
\Vertex[x=2.00,y=4.00,size=0.20,RGB,color={51,51,255}]{3}
\Vertex[x=1.33,y=3.00,size=0.20,RGB,color={51,51,255}]{4}
\Vertex[x=2.67,y=3.00,size=0.20,RGB,color={51,51,255}]{5}
\Vertex[x=0.80,y=2.00,size=0.20,RGB,color={51,51,255}]{6}
\Vertex[x=1.60,y=2.00,size=0.20,RGB,color={51,51,255}]{7}
\Vertex[x=2.40,y=2.00,size=0.20,RGB,color={51,51,255}]{8}
\Vertex[x=3.20,y=2.00,size=0.20,RGB,color={51,51,255}]{9}
\Vertex[x=0.44,y=1.00,size=0.20,RGB,color={51,51,255}]{10}
\Vertex[x=0.89,y=1.00,size=0.20,RGB,color={51,51,255}]{11}
\Vertex[x=1.33,y=1.00,size=0.20,RGB,color={51,51,255}]{12}
\Vertex[x=1.78,y=1.00,size=0.20,RGB,color={51,51,255}]{13}
\Vertex[x=2.22,y=1.00,size=0.20,RGB,color={51,51,255}]{14}
\Vertex[x=2.67,y=1.00,size=0.20,RGB,color={51,51,255}]{15}
\Vertex[x=3.11,y=1.00,size=0.20,RGB,color={51,51,255}]{16}
\Vertex[x=3.56,y=1.00,size=0.20,RGB,color={51,51,255}]{17}
\Edge(3)(5)
\Edge(4)(6)
\end{tikzpicture}

%% file: phonecall2_4.tex
\begin{tikzpicture}[scale=0.5]
\Vertex[x=2.00,y=6.00,size=0.20,RGB,color={51,51,255}]{1}
\Vertex[x=2.00,y=5.00,size=0.20,RGB,color={51,51,255}]{2}
\Vertex[x=2.00,y=4.00,size=0.20,RGB,color={51,51,255}]{3}
\Vertex[x=1.33,y=3.00,size=0.20,RGB,color={51,51,255}]{4}
\Vertex[x=2.67,y=3.00,size=0.20,RGB,color={51,51,255}]{5}
\Vertex[x=0.80,y=2.00,size=0.20,RGB,color={51,51,255}]{6}
\Vertex[x=1.60,y=2.00,size=0.20,RGB,color={51,51,255}]{7}
\Vertex[x=2.40,y=2.00,size=0.20,RGB,color={51,51,255}]{8}
\Vertex[x=3.20,y=2.00,size=0.20,RGB,color={51,51,255}]{9}
\Vertex[x=0.44,y=1.00,size=0.20,RGB,color={51,51,255}]{10}
\Vertex[x=0.89,y=1.00,size=0.20,RGB,color={51,51,255}]{11}
\Vertex[x=1.33,y=1.00,size=0.20,RGB,color={51,51,255}]{12}
\Vertex[x=1.78,y=1.00,size=0.20,RGB,color={51,51,255}]{13}
\Vertex[x=2.22,y=1.00,size=0.20,RGB,color={51,51,255}]{14}
\Vertex[x=2.67,y=1.00,size=0.20,RGB,color={51,51,255}]{15}
\Vertex[x=3.11,y=1.00,size=0.20,RGB,color={51,51,255}]{16}
\Vertex[x=3.56,y=1.00,size=0.20,RGB,color={51,51,255}]{17}
\Edge[bend=30](1)(17)
\Edge(2)(3)
\Edge(4)(7)
\Edge(5)(8)
\Edge(6)(10)
\end{tikzpicture}

%% file: phonecall2_5.tex
\begin{tikzpicture}[scale=0.5]
\Vertex[x=2.00,y=6.00,size=0.20,RGB,color={51,51,255}]{1}
\Vertex[x=2.00,y=5.00,size=0.20,RGB,color={51,51,255}]{2}
\Vertex[x=2.00,y=4.00,size=0.20,RGB,color={51,51,255}]{3}
\Vertex[x=1.33,y=3.00,size=0.20,RGB,color={51,51,255}]{4}
\Vertex[x=2.67,y=3.00,size=0.20,RGB,color={51,51,255}]{5}
\Vertex[x=0.80,y=2.00,size=0.20,RGB,color={51,51,255}]{6}
\Vertex[x=1.60,y=2.00,size=0.20,RGB,color={51,51,255}]{7}
\Vertex[x=2.40,y=2.00,size=0.20,RGB,color={51,51,255}]{8}
\Vertex[x=3.20,y=2.00,size=0.20,RGB,color={51,51,255}]{9}
\Vertex[x=0.44,y=1.00,size=0.20,RGB,color={51,51,255}]{10}
\Vertex[x=0.89,y=1.00,size=0.20,RGB,color={51,51,255}]{11}
\Vertex[x=1.33,y=1.00,size=0.20,RGB,color={51,51,255}]{12}
\Vertex[x=1.78,y=1.00,size=0.20,RGB,color={51,51,255}]{13}
\Vertex[x=2.22,y=1.00,size=0.20,RGB,color={51,51,255}]{14}
\Vertex[x=2.67,y=1.00,size=0.20,RGB,color={51,51,255}]{15}
\Vertex[x=3.11,y=1.00,size=0.20,RGB,color={51,51,255}]{16}
\Vertex[x=3.56,y=1.00,size=0.20,RGB,color={51,51,255}]{17}
\Edge(5)(9)
\Edge(6)(11)
\Edge(7)(12)
\Edge(8)(14)
\end{tikzpicture}

%% file: phonecall2_6.tex
\begin{tikzpicture}[scale=0.5]
\Vertex[x=2.00,y=6.00,size=0.20,RGB,color={51,51,255}]{1}
\Vertex[x=2.00,y=5.00,size=0.20,RGB,color={51,51,255}]{2}
\Vertex[x=2.00,y=4.00,size=0.20,RGB,color={51,51,255}]{3}
\Vertex[x=1.33,y=3.00,size=0.20,RGB,color={51,51,255}]{4}
\Vertex[x=2.67,y=3.00,size=0.20,RGB,color={51,51,255}]{5}
\Vertex[x=0.80,y=2.00,size=0.20,RGB,color={51,51,255}]{6}
\Vertex[x=1.60,y=2.00,size=0.20,RGB,color={51,51,255}]{7}
\Vertex[x=2.40,y=2.00,size=0.20,RGB,color={51,51,255}]{8}
\Vertex[x=3.20,y=2.00,size=0.20,RGB,color={51,51,255}]{9}
\Vertex[x=0.44,y=1.00,size=0.20,RGB,color={51,51,255}]{10}
\Vertex[x=0.89,y=1.00,size=0.20,RGB,color={51,51,255}]{11}
\Vertex[x=1.33,y=1.00,size=0.20,RGB,color={51,51,255}]{12}
\Vertex[x=1.78,y=1.00,size=0.20,RGB,color={51,51,255}]{13}
\Vertex[x=2.22,y=1.00,size=0.20,RGB,color={51,51,255}]{14}
\Vertex[x=2.67,y=1.00,size=0.20,RGB,color={51,51,255}]{15}
\Vertex[x=3.11,y=1.00,size=0.20,RGB,color={51,51,255}]{16}
\Vertex[x=3.56,y=1.00,size=0.20,RGB,color={51,51,255}]{17}
\Edge(7)(13)
\Edge(8)(15)
\Edge(9)(16)
\end{tikzpicture}

%% file: phonecall2_7.tex
\begin{tikzpicture}[scale=0.5]
\Vertex[x=2.00,y=6.00,size=0.20,RGB,color={51,51,255}]{1}
\Vertex[x=2.00,y=5.00,size=0.20,RGB,color={51,51,255}]{2}
\Vertex[x=2.00,y=4.00,size=0.20,RGB,color={51,51,255}]{3}
\Vertex[x=1.33,y=3.00,size=0.20,RGB,color={51,51,255}]{4}
\Vertex[x=2.67,y=3.00,size=0.20,RGB,color={51,51,255}]{5}
\Vertex[x=0.80,y=2.00,size=0.20,RGB,color={51,51,255}]{6}
\Vertex[x=1.60,y=2.00,size=0.20,RGB,color={51,51,255}]{7}
\Vertex[x=2.40,y=2.00,size=0.20,RGB,color={51,51,255}]{8}
\Vertex[x=3.20,y=2.00,size=0.20,RGB,color={51,51,255}]{9}
\Vertex[x=0.44,y=1.00,size=0.20,RGB,color={51,51,255}]{10}
\Vertex[x=0.89,y=1.00,size=0.20,RGB,color={51,51,255}]{11}
\Vertex[x=1.33,y=1.00,size=0.20,RGB,color={51,51,255}]{12}
\Vertex[x=1.78,y=1.00,size=0.20,RGB,color={51,51,255}]{13}
\Vertex[x=2.22,y=1.00,size=0.20,RGB,color={51,51,255}]{14}
\Vertex[x=2.67,y=1.00,size=0.20,RGB,color={51,51,255}]{15}
\Vertex[x=3.11,y=1.00,size=0.20,RGB,color={51,51,255}]{16}
\Vertex[x=3.56,y=1.00,size=0.20,RGB,color={51,51,255}]{17}
\Edge(9)(17)
\end{tikzpicture}

%% file: phonecalltwodesiredstate1.tex
\begin{tikzpicture}

\begin{axis}[%
width=0.169\columnwidth,
height=0.158\columnwidth,
at={(0\columnwidth,0\columnwidth)},
scale only axis,
xmin=1,
xmax=17,
xlabel style={font=\color{white!15!black}\small},
xlabel={Node},
ymin=1,
ymax=1.839361122715,
title={$\mathbf{r}(7)$},
axis background/.style={fill=white},
xmajorgrids,
ymajorgrids
]
\addplot [color=mycolor2, only marks, mark=triangle, mark options={solid, mycolor2}, forget plot]
  table[row sep=crcr]{%
1	1.06829963546012\\
2	1.08051668798367\\
3	1.11343340557298\\
4	1.09808860304556\\
5	1.16481104853837\\
6	1.16238899198996\\
7	1.26948112002542\\
8	1.26591347459247\\
9	1.49032829337727\\
10	1.05898265952191\\
11	1.09789296212984\\
12	1.09302943352027\\
13	1.15796683561631\\
14	1.09218908245625\\
15	1.15712458632702\\
16	1.14741308730373\\
17	1.37187845340705\\
};
\end{axis}

\begin{axis}[%
width=0.169\columnwidth,
height=0.158\columnwidth,
at={(0.222\columnwidth,0\columnwidth)},
scale only axis,
xmin=1,
xmax=17,
xlabel style={font=\color{white!15!black}\small},
xlabel={Node},
ymin=1,
ymax=1.839361122715,
title={$\mathbf{r}^*$},
axis background/.style={fill=white},
xmajorgrids,
ymajorgrids
]
\addplot [color=mycolor1, only marks, mark=o, mark options={solid, mycolor1}, forget plot]
  table[row sep=crcr]{%
1	1.06829963546012\\
2	1.08051668798367\\
3	1.2\\
4	1.2\\
5	1.16481104853837\\
6	1.16238899198996\\
7	1.26948112002542\\
8	1.26591347459247\\
9	1.49032829337727\\
10	1.05898265952191\\
11	1.09789296212984\\
12	1.09302943352027\\
13	1.15796683561631\\
14	1.09218908245625\\
15	1.15712458632702\\
16	1.14741308730373\\
17	1.37187845340705\\
};
\end{axis}

\end{tikzpicture}%

%% file: solution-phone.tex
\begin{tikzpicture}
\begin{groupplot}[
    group style={
        group size=1 by 3,
        vertical sep=0.03\columnwidth
    },
    width=\columnwidth,
    height=0.32\columnwidth,
    xmin=0, xmax=18,
    ymin=1, ymax=1.6,
    xmajorgrids,
    ymajorgrids,
    ylabel style={font=\color{mycolor3}},
    xlabel style={font=\color{mycolor3}},
]

\nextgroupplot[
    ylabel={$\alpha = 1$},
    xticklabels=\empty,
    legend columns=3,
    legend style={
        at={(0.5,1.15)},
        anchor=south,
        draw=none,
        fill=none,
        legend cell align=left,
        align=left,
    }
]

\addplot [color=red, only marks, mark=x] table[row sep=crcr]{%
1	1.0786393426712\\
2	1.09483968305955\\
3	1.15077162735902\\
4	1.13542756265054\\
5	1.20772072879306\\
6	1.20621519720419\\
7	1.34233865477421\\
8	1.33309250046405\\
9	1.58955606030043\\
10	1.0685413831856\\
11	1.12197935679257\\
12	1.11368375507223\\
13	1.20109087773216\\
14	1.11247260053834\\
15	1.19961396273128\\
16	1.18895703590209\\
17	1.42181246474763\\
};
\addlegendentry{Obtained State}

\addplot [color=mycolor1, only marks, mark=o] table[row sep=crcr]{%
1	1.08079942814544\\
2	1.09680542787917\\
3	1.2\\
4	1.2\\
5	1.20136727832821\\
6	1.19805985080723\\
7	1.3230535789279\\
8	1.31809809215451\\
9	1.58964650365395\\
10	1.06975582918861\\
11	1.11787941863531\\
12	1.11136089697095\\
13	1.18426661615855\\
14	1.11020674225078\\
15	1.18310096778931\\
16	1.17118427594065\\
17	1.44412871354738\\
};
\addlegendentry{Desired State}

\addplot [color=mycolor2, only marks, mark=triangle] table[row sep=crcr]{%
1	1.08079942814544\\
2	1.09680542787917\\
3	1.13971649104835\\
4	1.12018985464132\\
5	1.20136727832821\\
6	1.19805985080723\\
7	1.3230535789279\\
8	1.31809809215451\\
9	1.58964650365395\\
10	1.06975582918861\\
11	1.11787941863531\\
12	1.11136089697095\\
13	1.18426661615855\\
14	1.11020674225078\\
15	1.18310096778931\\
16	1.17118427594065\\
17	1.44412871354738\\
};
\addlegendentry{Uncontrolled}

\nextgroupplot[
    ylabel={$\alpha = 0.5$},
    xticklabels=\empty,
]

\addplot [color=red, only marks, mark=x] table[row sep=crcr]{%
1	1.08088042483289\\
2	1.09724699340787\\
3	1.15915004577134\\
4	1.14410246955396\\
5	1.20768529007561\\
6	1.20733533176274\\
7	1.34143812752833\\
8	1.32994677141666\\
9	1.59024820824711\\
10	1.06819523483222\\
11	1.12133468745286\\
12	1.11259957424824\\
13	1.19883227731458\\
14	1.11169852473112\\
15	1.19765221528784\\
16	1.18766682405383\\
17	1.42484815018526\\
};

\addplot [color=mycolor1, only marks, mark=o] table[row sep=crcr]{%
1	1.08079942814544\\
2	1.09680542787917\\
3	1.2\\
4	1.2\\
5	1.20136727832821\\
6	1.19805985080723\\
7	1.3230535789279\\
8	1.31809809215451\\
9	1.58964650365395\\
10	1.06975582918861\\
11	1.11787941863531\\
12	1.11136089697095\\
13	1.18426661615855\\
14	1.11020674225078\\
15	1.18310096778931\\
16	1.17118427594065\\
17	1.44412871354738\\
};

\addplot [color=mycolor2, only marks, mark=triangle] table[row sep=crcr]{%
1	1.08079942814544\\
2	1.09680542787917\\
3	1.13971649104835\\
4	1.12018985464132\\
5	1.20136727832821\\
6	1.19805985080723\\
7	1.3230535789279\\
8	1.31809809215451\\
9	1.58964650365395\\
10	1.06975582918861\\
11	1.11787941863531\\
12	1.11136089697095\\
13	1.18426661615855\\
14	1.11020674225078\\
15	1.18310096778931\\
16	1.17118427594065\\
17	1.44412871354738\\
};

\nextgroupplot[
    ylabel={$\alpha = 0.05$},
    xlabel={Node},
]

\addplot [color=red, only marks, mark=x] table[row sep=crcr]{%
1	1.08452597902433\\
2	1.10090222548038\\
3	1.18968033345151\\
4	1.18105450746469\\
5	1.2044561731658\\
6	1.20688685537181\\
7	1.33376051671088\\
8	1.3190314024434\\
9	1.59058129755791\\
10	1.06555609015619\\
11	1.11592389194697\\
12	1.10777238002573\\
13	1.18603318851323\\
14	1.11061295569294\\
15	1.18932420644004\\
16	1.17802750660452\\
17	1.43669093696263\\
};

\addplot [color=mycolor1, only marks, mark=o] table[row sep=crcr]{%
1	1.08079942814544\\
2	1.09680542787917\\
3	1.2\\
4	1.2\\
5	1.20136727832821\\
6	1.19805985080723\\
7	1.3230535789279\\
8	1.31809809215451\\
9	1.58964650365395\\
10	1.06975582918861\\
11	1.11787941863531\\
12	1.11136089697095\\
13	1.18426661615855\\
14	1.11020674225078\\
15	1.18310096778931\\
16	1.17118427594065\\
17	1.44412871354738\\
};

\addplot [color=mycolor2, only marks, mark=triangle] table[row sep=crcr]{%
1	1.08079942814544\\
2	1.09680542787917\\
3	1.13971649104835\\
4	1.12018985464132\\
5	1.20136727832821\\
6	1.19805985080723\\
7	1.3230535789279\\
8	1.31809809215451\\
9	1.58964650365395\\
10	1.06975582918861\\
11	1.11787941863531\\
12	1.11136089697095\\
13	1.18426661615855\\
14	1.11020674225078\\
15	1.18310096778931\\
16	1.17118427594065\\
17	1.44412871354738\\
};

\end{groupplot}
\end{tikzpicture}

%% file: nnz_message.tex
\definecolor{mycolor1}{rgb}{0.06600,0.44300,0.74500}%
\definecolor{mycolor2}{rgb}{0.12941,0.12941,0.12941}%
\begin{tikzpicture}

\begin{axis}[%
width=0.32\columnwidth,
height=0.3\columnwidth,
at={(0\columnwidth,0\columnwidth)},
scale only axis,
bar shift auto,
ymode=log,
xmin=0.6,
xmax=29.4,
ymin=1,
ymax=3898,
axis background/.style={fill=white}
]
\addplot[ybar, bar width=0.8, fill=mycolor1, draw=mycolor2, area legend] table[row sep=crcr] {%
1	110\\
2	1168\\
3	2868\\
4	3881\\
5	2822\\
6	3898\\
7	3224\\
8	1669\\
9	1558\\
10	208\\
11	138\\
12	690\\
13	551\\
14	383\\
15	292\\
16	359\\
17	294\\
18	266\\
19	315\\
20	285\\
21	295\\
22	208\\
23	274\\
24	225\\
25	211\\
26	169\\
27	127\\
28	138\\
29	114\\
};
\addplot[forget plot, color=mycolor2] table[row sep=crcr] {%
0.6	0\\
29.4	0\\
};
\end{axis}
\end{tikzpicture}%